\documentclass{article}

\usepackage{amsfonts}
\usepackage{amsthm}
\usepackage{amsmath}
\usepackage{amssymb}
\usepackage{mathtools} %for prescript
\usepackage{authblk} %for affil
\usepackage{xcolor}
\usepackage{soul} %for strikethrough
\usepackage{enumerate}
\usepackage{hyperref} %for url

\usepackage[top=1in, bottom=1in, left=1in, right=1in]{geometry}

\newtheorem{thm}{Theorem}[section]
\newtheorem{lem}{Lemma}[section]
\newtheorem{propn}{Proposition}[section]

\theoremstyle{definition}
\newtheorem{defn}{Definition}[section]

\theoremstyle{remark}
\newtheorem{remark}{Remark}[section]

\DeclareMathOperator*{\esssup}{ess\,sup}

\begin{document}

%\begin{frontmatter}

\title{Unique axiomatisation of fractional integral operators on the real line}

%%this line removes the date, but space is still left for it;
%if used, remove the \vspace{-1cm}
\date{}

%this gives the date in the form Mon 30 Jan 2012, 8:57pm;
%if used, retain the \vspace{-1cm}
%\date{\shortdayofweekname{\day}{\month}{\year}{ }\mydate\today}

\author[1,2]{Ceren \"Ozden}
\author[1]{Arran Fernandez\thanks{Corresponding author. Email: \texttt{arran.fernandez@emu.edu.tr}}}

\affil[1]{{\small Department of Mathematics, Eastern Mediterranean University, 99628 Famagusta, Northern Cyprus, via Mersin-10, Turkey}}
\affil[2]{{\small Department of Mathematics, Istanbul Technical University, Katar Caddesi, 34469 Istanbul, Turkey}}

% Latex won't make the title unless told:

\maketitle

%%to remove the space left for date, use:
%\vspace{-1cm}

\begin{abstract}
The Cartwright--McMullen theorem (1978) establishes the uniqueness of the Riemann--Liouville family of fractional integral operators on a compact interval $[0,1]$ under a natural set of axioms: inclusion of the classical integral, a semigroup property, positivity, and continuity. By unpacking the proof into its constituent steps and understanding its structure clearly, we provide improvements and extensions of the original result. Firstly, by using some properties of topological groups, we demonstrate that the positivity axiom can be removed with almost no effect on the result. Secondly, we consider a version of the theorem on the whole real line $\mathbb{R}$, with constant of integration $-\infty$ instead of $0$, which was mentioned without proof in the 1978 paper. The theorem can be extended to this setting, but it is not trivial to do so: the theory of Fr\'echet spaces must be used, and we introduced a new shift-commutativity axiom to get the density result that we need to extend the proof to spaces of functions and distributions on $\mathbb{R}$ with left-bounded support.
\end{abstract}

\section{Introduction}

The idea of extending differentiation and integration beyond integer orders is almost as old as calculus itself, being explicitly referenced in the correspondence between Leibniz and l'H\^opital, in a letter dated 1695. While no precise definition was created at that time, this correspondence shows that the idea of fractional calculus existed from the earliest stages of the development of calculus.

During the nineteenth century, some clear definitions began to emerge for derivatives and integrals of non-integer orders, but it seems that there were always multiple competing definitions. For example, Lacroix started from a very natural formula for fractional derivatives of power functions,
\[
\frac{\mathrm{d}^{n}}{\mathrm{d}x^n}\big(x^m\big)=\frac{\Gamma(m+1)}{\Gamma(m-n+1)}x^{m-n},
\]
and Liouville started from a very natural formula for fractional derivatives of exponential functions,
\[
\frac{\mathrm{d}^{n}}{\mathrm{d}x^n}\big(e^{ax}\big)=a^ne^{ax},
\]
to create two parallel theories that are not consistent with one another. Nowadays, those two parallel theories are both seen as particular cases of the so-called Riemann--Liouville fractional calculus, which encompasses both earlier constructions within a single definition. The Riemann--Liouville fractional integral with constant of integration $c$ is given by
\[
\prescript{RL}{c}D^{-\nu}_{x}f(x)=\frac{1}{\Gamma(\nu)}\int_c^x(x-y)^{\nu-1}f(y)\,\mathrm{d}y,\qquad\operatorname{Re}\nu>0,\quad x>c.
\]
Some straightforward integral calculations show that putting $c=0$ recovers Lacroix's formula,
\[
\prescript{RL}{0}D^{-\nu}_{x}\big(x^m\big)=\frac{\Gamma(m+1)}{\Gamma(m+\nu+1)}x^{m+\nu},\qquad m>-1,
\]
while putting $c=-\infty$ recovers Liouville's formula,
\[
\prescript{RL}{-\infty}D^{-\nu}_{x}\big(e^{ax}\big)=a^{-\nu}e^{ax},\qquad a>0.
\]
See \cite[Chapter I]{miller-ross} for more information on the history of fractional calculus, including the Lacroix--Liouville discrepancy.

Even today, the theory of fractional calculus is developing in several different directions, reflecting different methods and viewpoints, and so many different fractional derivative and integral operators have been defined that it is impossible to count them all \cite{teodoro-machado-oliveira,hilfer-luchko}. On the other hand, many of these new operators have direct relations back to the classical Riemann--Liouville operators, enabling them to be studied more efficiently \cite{fernandez-fahad}. Thus, it seems intuitively that the Riemann--Liouville fractional calculus -- or at least the fractional integral operator, since even this has several fractional derivatives associated with it \cite{diethelm,luchko:level} -- is somehow fundamental and unique within the rapidly expanding world of fractional-calculus operators. This naturally raises a structural question: under what conditions can a fractional integral operator be uniquely determined?

Such questions are not new. In 1974, the first international conference devoted entirely to fractional calculus was held in New Haven. During the session on open problems chaired by Thomas J. Osler \cite{ross}, John S. Lew posed the following question. If $\{I_\alpha\}_{\alpha\geq0}$ is a family of linear operators acting on $L^1(0, 1)$ or $L^2(0, 1)$ and satisfying the following conditions:
\begin{itemize}
\item $I_0 f = f$ and $I_1 f(x) = \int_{0}^{x} f (u) \,\mathrm{d}u$;
\item the semigroup property $I_\alpha I_\beta = I_{\alpha+\beta}$ holds for all $\alpha, \beta \geq 0$;
\item the map $\alpha\mapsto I_\alpha$ is continuous with respect to a suitable topology on $L$;
\item for each $\alpha\geq0$, $I_{\alpha} f \geq 0$ whenever $f \geq 0$ a.e.
\end{itemize}
then must $\{I_\alpha\}_{\alpha\geq0}$ be the family of Riemann--Liouville fractional integrals -- in other words, are these four conditions enough to determine this family uniquely?

This problem was solved within a few years: Donald I. Cartwright and John R. McMullen proved that the answer is yes, in a two-page paper \cite{cartwright-mcmullen} published in 1978. Their theorem (stated precisely as Theorem \ref{Thm:CMorig} below) shows that the family of Riemann--Liouville fractional integral operators is unique within the space of bounded linear operators on function spaces such as $L^p[0,1]$ or $C[0,1]$ satisfying the given set of conditions. Despite the power and value of this result, and its significance for the entire structure of fractional calculus, it has remained almost unknown within the research community, with fewer than 20 citations at the time of writing.

A few recent papers from Daniel Cao Labora and Marc Jornet have played a role in reigniting interest in the Cartwright--McMullen theorem. They extended the original result to fractional integrals with respect to monotonic functions \cite{caolabora} and to weighted fractional integrals with respect to functions in higher dimensions \cite{jornet}, using transmutation relations \cite{fernandez-fahad} to reduce these settings to the original Riemann--Liouville case. Our work goes in a different direction, seeking to extend the results on the original Riemann--Liouville fractional calculus rather than proving uniqueness of other operator families.

It is worth noting that, although the Cartwright--McMullen paper was only two pages, their proof was much more deep and intricate than it appears at first glance, relying on a non-trivial combination of ideas from functional analysis and convolution theory. Some of the necessary steps and facts in this proof were glossed over or presented in a highly compressed form in the original paper. Thus, section \ref{Sec:CMstructure} below is devoted to detailing the steps of the proof more clearly and thoroughly. We have also depicted the proof structure visually in a diagram (Figure \ref{Fig1}) which may serve as a graphical abstract of this paper.

Our main results lie in extending the Cartwright--McMullen theorem in two ways. Firstly, we notice that the positivity axiom is only used in a very weak sense, and we conjecture that it can be removed entirely. This conjecture is proved using an argument involving continuous homomorphisms between topological groups, and it is the main result of section \ref{Sec:nopos} of this paper. Secondly, we consider spaces of functions on the whole real line $\mathbb{R}$, rather than just on the compact interval $[0,1]$, and extend the axiomatisation result to this setting. This work is done in section \ref{Sec:Lploc}, where the main result is Theorem \ref{Thm:Lploc} proving the unique axiomatisation of Riemann--Liouville fractional integrals on the spaces of functions in $L^p_{\mathrm{loc}}(\mathbb{R})$ or $C^k(\mathbb{R})$ with left-bounded support. Finally, we also extend this result to the space of distributions with left-bounded support, in section \ref{Sec:distrib} with the main result being Theorem \ref{Thm:D'}. Not much work has been done on fractional calculus in spaces of distributions, and here we take inspiration from recent work of Hilfer and Kleiner \cite{hilfer-kleiner,kleiner-hilfer}. Results in spaces such as $L^p_{\mathrm{loc}}(\mathbb{R})$ and $C^k(\mathbb{R})$, and even the space of distributions with left-bounded support, are hinted at by Cartwright and McMullen in the last paragraph of their paper \cite{cartwright-mcmullen}, but never given explicitly, and the proofs are not as trivial as their brief mention there seems to imply. In fact, the set of required axioms is not even the same: our proof on $\mathbb{R}$ requires an extra assumption which is not mentioned in the Cartwright--McMullen paper. As with other more famous results in mathematics, it turns out that, even if a brief note may seem to be enough, in fact a lot more work is needed for a full proof.

\section{Preliminaries} \label{Sec:prelim}

We begin with an assortment of concepts and facts from the literature which will be used in the subsequent work later on in this paper.

\begin{defn}[Fourier and Laplace type convolutions]
The convolution of two functions $f$ and $g$ both defined on $\mathbb{R}$ is the function $f*g$ defined by
\begin{equation} \label{convol:F}
\big(f*g\big)(x)=\int_{-\infty}^{\infty}f(x-y)g(y)\,\mathrm{d}y.
\end{equation}
This formula defines the so-called Fourier-type convolution. If both $f$ and $g$ have support in $[0,\infty)$, then the above integral can be rewritten as follows:
\[
\big(f*g\big)(x)=\int_{0}^{x}f(x-y)g(y)\,\mathrm{d}y.
\]
This formula, in general, defines the so-called Laplace-type convolution. It can also be used as a natural convolution operation on spaces of functions defined on compact intervals such as $[0,1]$.

These names for the two types of convolution arise from their elegant properties in combination with, respectively, the Fourier and Laplace transforms. The difference between the two convolution formulae is small but significant.
\end{defn}

\begin{defn}
It is a standard fact of functional analysis that the function space $C[0,1]$ is a Banach space under the supremum norm $\|\cdot\|_{\infty}$, or indeed under any $L^p$ norm $\|\cdot\|_p$ for $p\geq 1$, and also that the space $L^p[0,1]$ of equivalence classes of functions (where functions are considered to be equivalent if they are equal almost everywhere) is a Banach space under the corresponding $L^p$ norm $\|\cdot\|_p$ defined by integrals over the interval $[0,1]$.

For spaces of functions defined on the whole real line, we do not have a natural normed space structure and it is necessary to use seminorms instead. As discussed in Rudin \cite[Examples 1.44 and 1.46]{rudin}, the function spaces $C(\mathbb{R})$ and $C^{\infty}(\mathbb{R})$ are Fr\'echet spaces under the countable family of seminorms defined by taking the supremum norm of a function (and of its derivatives, in the case of $C^\infty$) on compact intervals such as $[-n,n]$ whose union is $\mathbb{R}$. Similarly, we can define $L^p_{\mathrm{loc}}(\mathbb{R})$ and $C^k(\mathbb{R})$ as Fr\'echet spaces in the following way.

For $1\leq p\leq\infty$, the space $L^p_{\mathrm{loc}}(\mathbb{R})$ of equivalence classes of functions (where functions are considered to be equivalent if they are equal almost everywhere) is equipped with the following seminorms:
\[
q_K(f) := \|f\|_{L^p(K)} = \begin{cases}\displaystyle\left( \int_K |f(x)|^p \, dx \right)^{1/p}\quad&\quad\text{if }p\in[1,\infty),
\\ \\
\displaystyle\operatorname*{ess\,sup}_{x \in K} \big|f(x)\big|\quad&\quad\text{if }p=\infty,
\end{cases}
\]
where $K$ is a compact subset of $\mathbb{R}$, and it suffices to take a countable nested collection of such subsets whose union is $\mathbb{R}$, such as $K_n=[-n,n]$ for $n\in\mathbb{N}$. It is straightforward to verify that:
\begin{itemize}
\item the topology induced by these seminorms is the natural topology of local $L^p$-convergence;
\item if $q_{K}(f)=0$ for all $K$ in the countable family of compact subsets, then $f=0$ as an equivalence class of functions (thus the topology is Hausdorff);
\item the space $L^p_{\mathrm{loc}}(\mathbb{R})$ is complete with respect to this family of seminorms;
\end{itemize}
and thus we have a Fr\'echet space.

For $k\in\mathbb{Z}^+_0$, the space $C^k(\mathbb{R})$ of $k$ times continuously differentiable functions is equipped with the following seminorms:
\[
q_K(f) := \|f\|_{C^k(K)} = \sup_{x\in K}\big|f(x)\big|+\sup_{x\in K}\big|f'(x)\big|+\sup_{x\in K}\big|f''(x)\big|+\cdots+\sup_{x\in K}\big|f^{(k)}(x)\big|,
\]
where $K$ is a compact subset of $\mathbb{R}$, and it suffices to take a countable nested collection of such subsets whose union is $\mathbb{R}$, such as $K_n=[-n,n]$ for $n\in\mathbb{N}$. Again, it is straightforward to verify that:
\begin{itemize}
\item the topology induced by these seminorms is the natural topology of $C^k$-convergence;
\item if $q_{K}(f)=0$ for all $K$ in the countable family of compact subsets, then $f=0$ (thus the topology is Hausdorff);
\item the space $C^k(\mathbb{R})$ is complete with respect to this family of seminorms;
\end{itemize}
and thus we have a Fr\'echet space.

\begin{remark} \label{Rem:LEfrechet}
The following fact will be useful later. If $E$ is a Fr\'echet space, then the space $\mathcal{L}(E)$ of continuous linear mappings from $E$ to itself is also a seminormed space which is Hausdorff \cite[Theorem 12.3]{simon} and sequentially complete \cite[Section 12.3]{simon}.
\end{remark}

\end{defn}

\begin{defn}[Riemann--Liouville fractional integrals]
For every $\alpha>0$, we define the function $h_{\alpha}\in L^1_{\text{loc}}(0,\infty)$ as follows:
\begin{equation} \label{h:fns}
h_{\alpha}(x)=\begin{cases}
\displaystyle\frac{x^{\alpha-1}}{\Gamma(\alpha)},&\qquad x>0;
\\
0,&\qquad x\leq0.
\end{cases}
\end{equation}
These functions form a convolutional semigroup within $L^1_{\text{loc}}(0,\infty)$, which is isomorphic to the semigroup of positive real numbers under addition \cite{fernandez}.

The fractional integral to order $\alpha>0$, with constant of integration equal to $0$, is defined by a Laplace-type convolution $I^{\alpha}f=f*h_{\alpha}$ for suitable functions $f$ defined on $(0,\infty)$, i.e. by the following expression:
\[
\big(I^{\alpha}_{0}f\big)(x)=\frac{1}{\Gamma(\alpha)}\int_{0}^{x}(x-t)^{\alpha-1}f(t)\,\mathrm{d}t.
\]
The largest possible function space for $f$, such that these operators are defined, is the space $L^1_{\text{loc}}[0,\infty)$, as shown in \cite{martinez-sanz-martinez}.

The fractional integral to order $\alpha>0$, with constant of integration equal to $-\infty$, is defined by a Fourier-type convolution $I^{\alpha}f=f*h_{\alpha}$ for suitable functions $f$ defined on $\mathbb{R}$, i.e. by the following expression:
\[
\big(I^{\alpha}_{-\infty}f\big)(x)=\frac{1}{\Gamma(\alpha)}\int_{-\infty}^{x}(x-t)^{\alpha-1}f(t)\,\mathrm{d}t.
\]
One possible function space for $f$, such that these operators are defined, is the space of all functions $f\in L^1_{\text{loc}}(\mathbb{R})$ which have left-bounded support, i.e. such that $\operatorname{supp}(f)\subset[a,\infty)$ for some finite $a\in\mathbb{R}$. This is not the largest possible function space where the fractional integral operators are defined, e.g. because they are also defined for the exponential function which has support $\mathbb{R}$. On the other hand, these operators cannot be defined on the whole of $L^1_{\text{loc}}(\mathbb{R})$, e.g. because the improper integral diverges when $f$ is a non-zero constant function.

The $h_{\alpha}$ functions satisfy $h_\alpha*h_\beta=h_{\alpha+\beta}$ for all $\alpha,\beta>0$, where the convolution here can be either of Laplace type or of Fourier type, since both are equivalent for functions that are supported in $[0,\infty)$. Consequently, we have $I^\alpha_c\circ I^\beta_c = I^{\alpha+\beta}_c$ for all $\alpha,\beta>0$, for either (or any) value of $c$.

Cartwright and McMullen's original result proved uniqueness of the operators $I^{\alpha}_{0}$ under a suitable set of conditions. Our Theorem \ref{Thm:Lploc} and Theorem \ref{Thm:D'} below will prove uniqueness of the operators $I^{\alpha}_{-\infty}$ under a similar set of conditions, in suitable spaces of functions or distributions respectively.
\end{defn}

\section{The structure of Cartwright and McMullen's proof} \label{Sec:CMstructure}

In this section, we state precisely the original theorem of Cartwright and McMullen concerning operators on function spaces on the compact interval $[0,1]$. We sketch an outline of its proof below, and we have also created, in Figure \ref{Fig1}, a visual depiction of the structure of the proof, with all axioms and intermediate facts and steps shown explicitly. Understanding the proof in this structured way was invaluable for us in figuring out how to extend and modify the result, in the later sections of this paper, and we hope that it will also be useful for others in understanding the proof more clearly and continuing future work on such results.

\begin{thm}[Cartwright--McMullen theorem \cite{cartwright-mcmullen}] \label{Thm:CMorig}
Let $E$ be a space of functions on the interval $[0,1]$, either $E=C[0,1]$ or $E=L^p[0,1]$ with $1\leq p<\infty$. There exists a unique family $\{J_{\alpha}\}_{\alpha>0}$ of bounded linear operators on $E$ satisfying the following four axioms:
\begin{enumerate}[(i)]
\item $\displaystyle \big(J_1f\big)(x)=\int_0^xf(t)\,\mathrm{d}t$ for every $f\in E$ and a.e. $x\in[0,1]$.
\item Semigroup property: $J_{\alpha}J_{\beta}=J_{\alpha+\beta}$ for all $\alpha,\beta>0$.
\item Positivity: for every $\alpha>0$, if $f\in E$ and $f\geq0$, then $J_{\alpha}f\geq0$ a.e.
\item Continuity: the map $\alpha\mapsto J_{\alpha}$ is continuous from $(0,\infty)$ into the space $\mathcal{L}(E)$ under some Hausdorff topology weaker than the operator norm topology on this space.
\end{enumerate}
The unique family satisfying these four axioms is the family of Riemann--Liouville fractional integrals with constant of integration $0$, namely $J_{\alpha}f=I^{\alpha}_{0}f=f*h_{\alpha}$ where the convolution is of Laplace type.
\end{thm}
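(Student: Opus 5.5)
Existence is immediate: the Riemann--Liouville operators $I^\alpha_0 f=f*h_\alpha$ are bounded on $E$ by Young's inequality (since $\|h_\alpha\|_{L^1[0,1]}=1/\Gamma(\alpha+1)$). They satisfy (i) because $h_1=1$ on $(0,\infty)$, (ii) because $h_\alpha*h_\beta=h_{\alpha+\beta}$, and (iii) because $h_\alpha\ge0$. For (iv) we use continuity of $\alpha\mapsto h_\alpha$ in $L^1[0,1]$, which follows by dominated convergence on compact subsets of $(0,\infty)$; this gives continuity in operator norm, hence in any weaker topology. The real content is uniqueness, and the plan follows the chain \emph{commutation} $\Rightarrow$ \emph{convolution representation} $\Rightarrow$ \emph{semigroup of kernels} $\Rightarrow$ \emph{identification}.

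\textbf{Step 1 (commutation and representation).} By (ii), $J_\alpha J_1=J_{\alpha+1}=J_1J_\alpha$, so each $J_\alpha$ commutes with the Volterra operator $J_1=I^1_0$. Commuting with $J_1$ implies commuting with every polynomial in $J_1$, hence with every $f\mapsto f*p$ for $p$ a polynomial. Such convolutions are dense among the operators $f\mapsto f*k$, $k\in L^1$, by norm density of polynomials in $L^1[0,1]$ and Young's inequality. So $J_\alpha$ commutes with all Laplace-type convolutions by $L^1$ functions. Set $k_\alpha:=J_\alpha\mathbf 1$. Then
\[
J_1J_\alpha f=J_\alpha J_1 f=J_\alpha(f*\mathbf 1)=f*J_\alpha\mathbf 1=f*k_\alpha,
\]
and after differentiating, or directly, $J_\alpha f=f*k'_\alpha$ in a distributional sense. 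The cleaner route is to work with $k_\alpha$ itself: $J_{\alpha+1}f=f*k_\alpha$, and hence $J_\alpha f=f*\kappa_\alpha$ with $\kappa_\alpha=k_{\alpha-1}$ for $\alpha>1$. The case $\alpha\le1$ then follows from (ii) together with Titchmarsh's convolution theorem, which rules out zero divisors and determines factors uniquely on $[0,1]$. By (iii), each $\kappa_\alpha$ is a nonnegative measure, and (ii) gives $\kappa_\alpha*\kappa_\beta=\kappa_{\alpha+\beta}$ with $\kappa_1=\mathbf 1=h_1$.

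\textbf{Step 2 (identification of the kernels).} Take Laplace transforms after extending by zero, or use Titchmarsh-based uniqueness of convolution roots. From $\kappa_{1/n}^{*n}=h_1$, nonnegativity forces $\kappa_{1/n}=h_{1/n}$; the idea is that an $n$-th convolution root of $h_1$ among positive measures on $[0,1]$ is unique, since $n$-th roots of Laplace transforms are unique up to an $n$-th root of unity and positivity picks the real positive branch. Then $\kappa_{m/n}=h_{m/n}$ for all rationals, and continuity (iv) together with the Hausdorff property extends this to all $\alpha>0$. This works because $\alpha\mapsto I^\alpha_0$ is also continuous in the weaker topology, and two continuous maps into a Hausdorff space that agree on a dense set agree everywhere.

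\textbf{Main obstacle.} The hard step is the root uniqueness in Step 2, done rigorously on the bounded interval. Functions on $[0,1]$ have no honest Laplace transform. I would extend $J_\alpha$ to $L^1_{\mathrm{loc}}[0,\infty)$ via the commutation and Titchmarsh argument: the representation on $[0,T]$ is consistent because convolution kernels restricted to $[0,1]$ only see $[0,1]$. Alternatively, I would argue with the group of characters $\alpha\mapsto$ ``Laplace symbol'' modulo roots of unity, where positivity (or continuity) selects the trivial homomorphism. A second delicate point is justifying that $J_\alpha$ for $\alpha<1$ is a convolution with a measure rather than a more singular object. Here I would use $J_\alpha^{\,n}=J_{n\alpha}$ with $n\alpha>1$, together with Titchmarsh's theorem, to pin it down.
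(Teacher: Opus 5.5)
Your overall architecture matches the paper's. In particular, your identity $J_1J_\alpha f=f*k_\alpha$ with $k_\alpha=J_\alpha\mathbf 1$ is exactly the paper's key trick $S=R_{h_1}T=R_{T(h_1)}$. The gap is in identifying the kernels on $[0,1]$, which you correctly flag as the hard step but do not resolve.

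Laplace transforms are unavailable here. The relation $\kappa_{1/n}^{*n}=h_1$ holds only for the truncated convolution on $[0,1]$. After extending by zero you know $\tilde\kappa_{1/n}^{*n}$ only on $[0,1)$, so there is no known transform whose $n$th root you could take. Extending $J_\alpha$ to $L^1_{\mathrm{loc}}[0,\infty)$ does not help either: the kernels are determined only on $[0,1]$, and any extension obeying the semigroup law beyond $[0,1]$ presupposes the answer.

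Your claim that Titchmarsh's theorem rules out zero divisors on $[0,1]$ is false: $\chi_{[1/2,1]}*\chi_{[1/2,1]}=0$ on $[0,1]$. What Titchmarsh actually gives, for nonzero zero-extended functions, is $\inf\operatorname{supp}(u*v)=\inf\operatorname{supp}u+\inf\operatorname{supp}v$. So from $0=g^{*n}-h_\beta^{*n}=\prod_{k=1}^n(g-\eta_kh_\beta)$ on $[0,1]$ you only get $\sum_k a_k\ge1$, where $a_k=\inf\operatorname{supp}(g-\eta_kh_\beta)$. The missing idea is that the factors differ pairwise by nonzero multiples of $h_\beta$, which vanishes on no interval $[0,\varepsilon]$. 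Hence at most one $a_k$ is positive, that one must be $\ge1$, and so $g=\eta_kh_\beta$ on $[0,1]$. This is fact~6.

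Second, you never actually show that $J_\alpha$ is a convolution operator for $\alpha\le1$. Titchmarsh is a uniqueness tool and cannot produce the quotient of $k_\alpha$ by $\mathbf 1$. Knowing that $J_\alpha^n=J_{n\alpha}$ is a convolution operator does not make $J_\alpha$ one.

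You do not need this representation. The kernel $k_{1/n}=J_{1/n}\mathbf 1\in E$ already satisfies
\[
R_{k_{1/n}}^n=(J_1J_{1/n})^n=J_1^{n+1}=R_{h_{n+1}}.
\]
Injectivity of $g\mapsto R_g$ then gives $k_{1/n}^{*n}=h_{1+1/n}^{*n}$. The argument above with $\beta=1+1/n$ gives $k_{1/n}=\eta h_{1+1/n}$. Cancelling the injective operator $J_1$ from $J_1J_{1/n}=\eta J_1I_0^{1/n}$ gives $J_{1/n}=\eta I_0^{1/n}$, and positivity then forces $\eta=1$. This is the paper's proof of fact~7: root uniqueness at order $m+1$, then cancellation.

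If you want to keep your measure representation, positivity does supply it. For $f\ge0$, the function $f*k_\alpha=J_1(J_\alpha f)$ is nondecreasing. Letting $f$ run through a nonnegative approximate identity shows $k_\alpha$ is a.e.\ nondecreasing, i.e.\ $k_\alpha=\mathbf 1*\mu_\alpha$ with $\mu_\alpha\ge0$ on $[0,1)$. Cancelling $J_1$ then gives $J_\alpha f=f*\mu_\alpha$. However, identifying the roots still requires the refined Titchmarsh argument above, now for measures.
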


\begin{proof}[Sketch of proof]
The proof involves the following three basic mathematical objects:
\begin{itemize}
\item a larger function space $A=L^1[0,1]$;
\item a smaller function space $E$, taken to be either $L^p[0,1]$ or $C[0,1]$ as stated in the theorem;
\item a family of operators $R_gf=f*g$ for $f\in E$ and $g\in A$ (where the convolution is of Laplace type).
\end{itemize}
The following facts are proved about these.
\begin{enumerate}
\item For every $g\in A$, the operator $R_g$ is linear and bounded from $E$ into itself.
\item The mapping $g\mapsto R_g$ is continuous from $A$ into $\mathcal{L}(E)$.
\item For every non-trivial $g\in A$ (not identically zero on any interval), the operator $R_g$ is injective.
\item The mapping $g\mapsto R_g$ is injective from $A$ into $\mathcal{L}(E)$.
\item For any $T\in\mathcal{L}(E)$, if $T$ commutes with $R_1=J_1=I_0^1$, then $T$ commutes with $R_g$ for every $g\in A$.
\item For any $g\in A$ and $m\in\mathbb{N}$, if $R_g^m=R_1$, then $R_g=\eta\cdot I_0^{1/m}$ for some $\eta\in\mathbb{C}$ satisfying $\eta^m=1$.
\item For any $T\in\mathcal{L}(E)$ and $m\in\mathbb{N}$, if $T^m=R_1$, then $T=\eta\cdot I_0^{1/m}$ for some $\eta\in\mathbb{C}$ satisfying $\eta^m=1$.
\end{enumerate}
Facts 1 and 2 follow from Young's convolution inequality. Facts 3 and 4 and 6 follow from Titchmarsh's convolution theorem. Fact 5 follows from the denseness of polynomials in $L^1[0,1]$ together with fact 2. Finally, to prove fact 7, the already-proven facts 5 and 4 and 6 and 3 are used respectively, in a non-trivial argument that is essentially reproduced in the proof of Lemma \ref{lem:root_without_assuming_Rg} below.

Axiom (i) already gives $J_{\alpha}=I_0^{\alpha}$ for $\alpha=1$. Axiom (ii), fact 7, and axiom (iii) together give $J_{\alpha}=I_0^{\alpha}$ for $\alpha=1/m$, for any $m\in\mathbb{N}$. Axiom (ii) then gives $J_{\alpha}=I_0^{\alpha}$ for all $\alpha\in\mathbb{Q}^+$. Finally, axiom (iv) gives $J_{\alpha}=I_0^{\alpha}$ for all $\alpha\in\mathbb{R}^+$ by a continuity argument.
\end{proof}

\begin{figure}
\centering
\includegraphics[width=\linewidth]{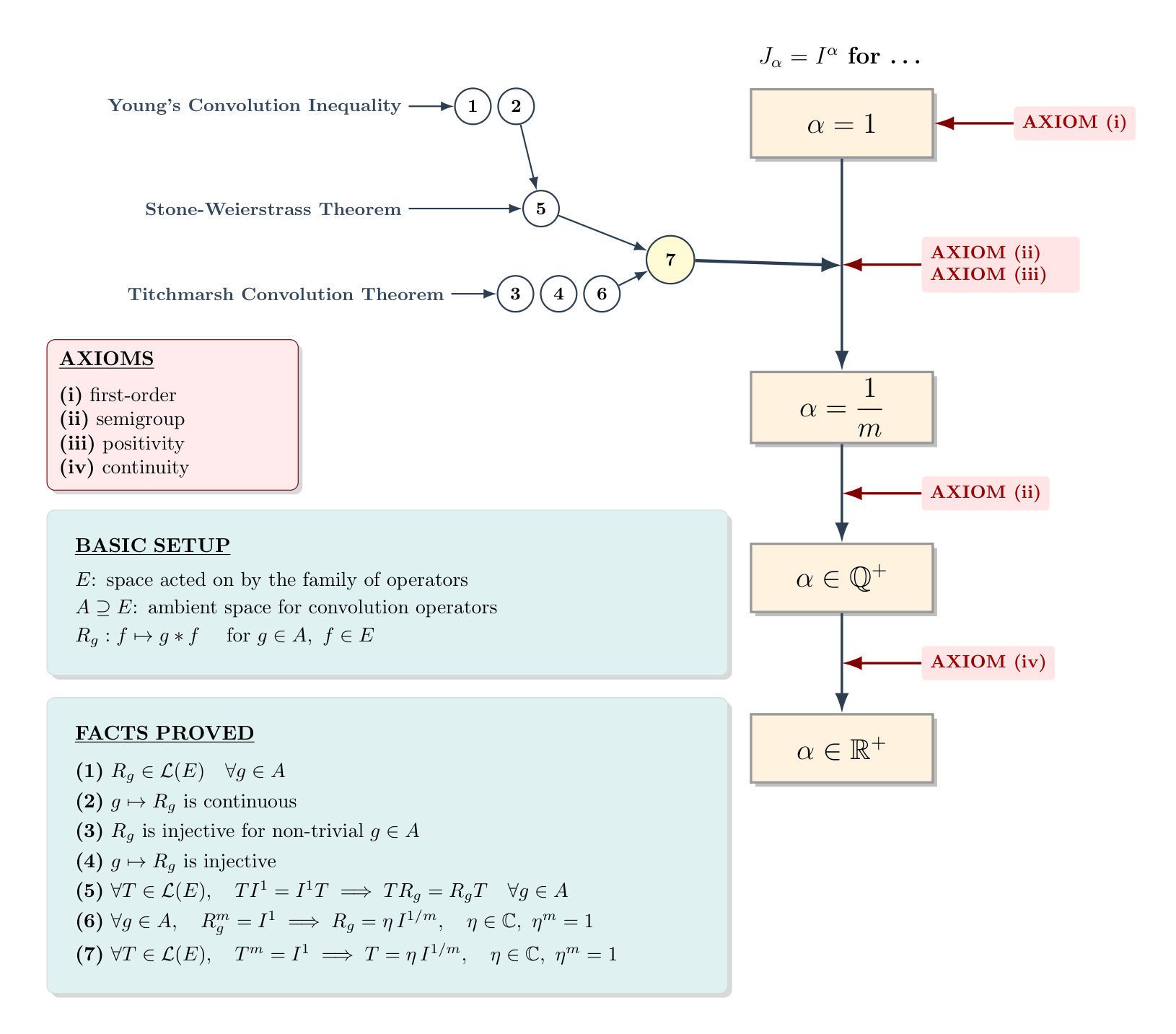}
\caption{The structure of the proof of the Cartwright--McMullen theorem.}
\label{Fig1}
\end{figure}

\section{Improvements to the original result on $[0,1]$} \label{Sec:nopos}

In this section, we present some improvements to the original result of Cartwright and McMullen (Theorem \ref{Thm:CMorig} above) that occurred to us while examining the proof structure.

First of all, the assumption that $1\leq p<\infty$ does not seem to be necessary, as the proof works in just the same way when $p=\infty$. The only potential problem is that the space of polynomials is not dense in $L^{\infty}[0,1]$, but it does not need to be, since denseness of polynomials is only used in the ambient space $A$ (which is $L^1[0,1]$ in every case) rather than the space $E$ which depends on $p$. Thus, we can extend the statement of Theorem \ref{Thm:CMorig} to include $1\leq p\leq\infty$.

Secondly, let us examine axiom (iii), the positivity condition. It is interesting to note that this axiom is only used once in the proof of Theorem \ref{Thm:CMorig}, and only to eliminate the possibility of complex roots of unity appearing as multipliers on the rational-order operators. Intuitively, it feels that this condition is unnecessarily strong: axioms (i) and (ii) are obviously necessary to fix the type of operators we are dealing with, and axiom (iv) enables us to extend the result from $\mathbb{Q}$ to $\mathbb{R}$ in a natural way, but is it really necessary to assume positivity of all the operators just to avoid the complex roots of unity as factors?

It is trivial to see that axiom (iii) can be replaced by the following statement without affecting the result: \emph{
\begin{enumerate}
\item[(iii*)] Weak version of positivity: for every $\alpha>0$, there exists $f\in E$ such that $f>0$ a.e. and $J_\alpha f>0$ a.e.
\end{enumerate}}
\noindent This is enough because, if we already know that $J_{1/m}=\eta\cdot I_0^{1/m}$ for some complex root of unity $\eta$, then having even a single function $f$ with $J_{1/m}f>0$ a.e. and $I_0^{1/m}>0$ a.e. is enough to say that $\eta$ must be a positive real number, therefore must be $1$.

However, we can do even better than this. The following theorem shows that the positivity axiom can be eliminated without introducing any significantly different families of operators. Positivity is necessary for strict uniqueness, as without it there are infinitely many possible families of operators, but all of these are essentially trivial modifications (constant multiples) of the Riemann--Liouville family. The following result is new and appears here for the first time.

\begin{thm}[Cartwright--McMullen theorem without positivity condition] \label{Thm:CMnopos}
Let $E$ be a space of functions on the interval $[0,1]$, either $E=C[0,1]$ or $E=L^p[0,1]$ with $1\leq p<\infty$. Let $\{J_{\alpha}\}_{\alpha>0}$ be a family of bounded linear operators on $E$ satisfying the following three axioms:
\begin{enumerate}[(i)]
\item $\displaystyle \big(J_1f\big)(x)=\int_0^xf(t)\,\mathrm{d}t$ for every $f\in E$ and a.e. $x\in[0,1]$.
\item Semigroup property: $J_{\alpha}J_{\beta}=J_{\alpha+\beta}$ for all $\alpha,\beta>0$.
\item Continuity: the map $\alpha\mapsto J_{\alpha}$ is continuous from $(0,\infty)$ into the space $\mathcal{L}(E)$ under some metric weaker than the operator norm metric on this space.
\end{enumerate}
Then, there exists a fixed $n\in\mathbb{Z}$ such that
\[
J_{\alpha}f=e^{2n\pi i\alpha}\cdot I^{\alpha}_{0}f,
\]
meaning that every operator $J_{\alpha}$ is a constant multiple of the corresponding Riemann--Liouville fractional integral operator with constant of integration $0$, in such a way that the constant multipliers can be interpreted as ``fractional powers of $1$''. Taking $n=0$ leads to the Riemann--Liouville family itself.
\end{thm}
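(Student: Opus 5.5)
The plan is to reuse the first part of the Cartwright--McMullen argument unchanged, namely everything that does not depend on positivity, and then to pin down the multipliers with a topological-group argument. By axiom (ii) and fact 7 from the sketch of proof of Theorem \ref{Thm:CMorig}, for each $m\in\mathbb{N}$ we have $J_{1/m}^m=J_1=R_1$. Hence $J_{1/m}=\eta_m I_0^{1/m}$ for some $m$-th root of unity $\eta_m$. The semigroup property then gives, for every positive rational $q=k/m$,
\[
J_{k/m}=J_{1/m}^k=\eta_m^k I_0^{k/m}.
\]
We must check that this is well defined. If $k/m=k'/m'$, then both expressions are equal to $J_{q}$. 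Since $I_0^q\neq0$, by injectivity (fact 4), the scalars agree. So we obtain a function $\phi:\mathbb{Q}^+\to\mathbb{T}$, defined by $J_q=\phi(q)I_0^q$. It satisfies $\phi(q+r)=\phi(q)\phi(r)$, which follows from $J_qJ_r=J_{q+r}$, the semigroup law for $I_0$, and again $I_0^{q+r}\neq0$. It also satisfies $\phi(1)=1$.

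Next I extend $\phi$ to $(0,\infty)$ using continuity. Fix a nonzero $f\in E$ and a functional $\Lambda$ such that $\Lambda(I_0^\alpha f)\neq0$ near a given $\alpha_0$. The map $\alpha\mapsto I_0^\alpha$ is continuous in operator norm (fact 2, since $\alpha\mapsto h_\alpha$ is continuous in $L^1$). Therefore it is also continuous in the weaker metric. Now take rationals $q_j\to\alpha$. Then $J_{q_j}\to J_\alpha$ and $I_0^{q_j}\to I_0^\alpha$ in the metric. By compactness of $\mathbb{T}$, some subsequence has $\phi(q_j)\to c$. Because the metric is compatible with the vector space operations, we get $\phi(q_j)I_0^{q_j}\to cI_0^\alpha$, so $J_\alpha=cI_0^\alpha$.

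Injectivity of $g\mapsto R_g$ makes $c$ unique. So every limit point is the same, which shows that $\phi$ extends to a function on $(0,\infty)$ with $J_\alpha=\phi(\alpha)I_0^\alpha$. The homomorphism property passes to the limit. The extended $\phi$ is continuous: if $\alpha_j\to\alpha$ and $\phi(\alpha_j)$ had two distinct limit points, the same argument would give $J_\alpha=c I_0^\alpha=c'I_0^\alpha$ with $c\neq c'$, a contradiction.

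The main obstacle is the point just used: we must ensure that scalar multiplication and convergence interact correctly in the "weaker metric", and that $c\mapsto cI_0^\alpha$ is a homeomorphism onto its image on $\mathbb{T}$. For the latter, $\{cI_0^\alpha:c\in\mathbb{T}\}$ is the continuous injective image of a compact set in a Hausdorff space, so it is a homeomorphism. For the joint convergence $\phi(q_j)I_0^{q_j}\to cI_0^\alpha$, I would argue as follows. Write it as $\phi(q_j)(I_0^{q_j}-I_0^\alpha)+\phi(q_j)I_0^\alpha$. The first term tends to zero in operator norm, since the scalars are bounded, and hence tends to zero in the weaker metric. The second term converges by continuity of $c\mapsto cI_0^\alpha$ into the norm topology.

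Finally, a continuous homomorphism $\phi:(0,\infty)\to\mathbb{T}$ extends to a continuous homomorphism $\mathbb{R}\to\mathbb{T}$, via $\phi(-x)=\overline{\phi(x)}$. It is therefore of the form $\phi(\alpha)=e^{i\lambda\alpha}$. The constraint $\phi(1)=1$ forces $\lambda=2n\pi$ with $n\in\mathbb{Z}$. Hence $J_\alpha=e^{2n\pi i\alpha}I_0^\alpha$ for all $\alpha>0$, with $n=0$ recovering the Riemann--Liouville family.
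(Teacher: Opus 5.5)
Your argument is correct. Its skeleton matches the paper's: fact 7 at $\alpha=1/m$, a homomorphism $\mathbb{Q}^+\to\mathbb{T}$, extension to the reals, classification of continuous homomorphisms $\mathbb{R}\to\mathbb{T}$, and $\phi(1)=1$ forcing $\lambda\in2\pi\mathbb{Z}$. The difference is in the extension step.

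The paper restricts $\alpha\mapsto J_\alpha$ to compact intervals $[a,b]$ and uses uniform continuity there to assert that $\ell$ is uniformly continuous on $\mathbb{Q}\cap[a,b]$. It then extends $\ell$ by the density theorem for uniformly continuous maps, and only at the end invokes the continuity axiom again to get $J_\alpha=\ell(\alpha)I_0^\alpha$ for irrational $\alpha$.

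You instead show directly, for every real $\alpha$, that $J_\alpha=cI_0^\alpha$ with $c\in\mathbb{T}$. You extract a convergent subsequence of the unimodular scalars and compute the limit in the operator norm, where $\alpha\mapsto I_0^\alpha$ is continuous by fact 2. The weaker metric is used only through uniqueness of limits, and the same device gives continuity of $\phi$.

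This buys two things. First, it supplies what the paper's ``thus $\ell$ is also uniformly continuous'' leaves unsaid. Because the weaker metric need not interact well with scalar multiplication, that deduction is not automatic. The natural way to prove it is exactly your compactness argument, after which the uniform-continuity and extension machinery becomes unnecessary.

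Second, your proof only uses two facts: sequential limits are unique in the weaker topology, and continuity on $(0,\infty)$ can be tested on sequences. So it works for any Hausdorff topology weaker than the norm topology, i.e.\ under the original axiom (iv) of Theorem~\ref{Thm:CMorig}. The metrisability strengthening that the paper's remark says its proof needs is therefore avoidable. The same mechanism also looks applicable to the settings on $\mathbb{R}$, provided $\alpha\mapsto I^{\alpha}_{-\infty}$ is continuous into $\mathcal{L}(E)$ with its usual topology.

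A few small clean-ups:
\begin{enumerate}
\item Drop the sentence saying the metric is compatible with the vector space operations, since this is not part of the hypothesis. Write $\|\phi(q_j)I_0^{q_j}-cI_0^\alpha\|\le\|I_0^{q_j}-I_0^\alpha\|+|\phi(q_j)-c|\,\|I_0^\alpha\|\to0$ first, and only then pass to the weaker metric. Your two-term decomposition should be summed in norm, not in the metric.
\item The functional $\Lambda$ and the homeomorphism property of $c\mapsto cI_0^\alpha$ are never used and can be removed.
\item Continuity at $0$ of your extension to $\mathbb{R}$ should be noted. It follows from $\phi(x)=\phi(x+1)$ for $x>0$.
\end{enumerate}
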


\begin{proof}
Since the positivity condition is only used towards the end of the proof of Theorem \ref{Thm:CMorig}, we can do most of the proof in exactly the same way as Cartwright and McMullen did. In particular, facts 1 to 7 are still true without the positivity condition, so we know the following:
\[
\forall\;m\in\mathbb{N},\:\exists\;\eta\in\mathbb{C}\::\:\eta^m=1\text{ and }J_{1/m}=\eta\cdot I_{0}^{1/m}.
\]
Let $\ell:\mathbb{Q}^+\to S^1\subset\mathbb{C}$ be the map that sends $1/m$ to $\eta$ as given above, and by the semigroup property sends $k/m$ to $\eta^k$. This defines a group homomorphism
\[
\ell:\left(\mathbb{Q},+\right)\longrightarrow\left(S^1,\times\right),
\]
where we define $\ell(0)=0$ and $\ell(-x)=-\ell(x)$ for every $x\in\mathbb{Q}^+$ to get a well-defined homomorphism on the whole of $\mathbb{Q}$.

Recall that we have a mapping
\begin{align*}
J:(0,\infty)&\longrightarrow\mathcal{L}(E) \\
\alpha&\longmapsto J_{\alpha}
\end{align*}
which is a continuous mapping between metric spaces, by the last of the stated axioms on the family $(J_\alpha)$. For any compact interval $[a,b]\subset\mathbb{R}^+$, the mapping $J\big|_{[a,b]}$ is a continuous function from a compact metric space to another metric space, which must be uniformly continuous \cite[Proposition 13.24]{sutherland}. Restricting to the domain $\mathbb{Q}\cap[a,b]$, and using our knowledge of the nature of $J_{\alpha}$ for rational $\alpha$, we therefore have a uniformly continuous map
\begin{align*}
J:\mathbb{Q}\cap[a,b]&\longrightarrow\mathcal{L}(E) \\
\alpha&\longmapsto\ell(\alpha)\cdot I_{0}^{\alpha}.
\end{align*}
Similarly, we have another uniformly continuous map
\begin{align*}
I:\mathbb{Q}\cap[a,b]&\longrightarrow\mathcal{L}(E) \\
\alpha&\longmapsto I_{0}^{\alpha}.
\end{align*}
Thus, the function $\ell$ is also uniformly continuous on the domain $\mathbb{Q}\cap[a,b]$. Every uniformly continuous function on a dense subspace of a metric space can be uniquely extended to a uniformly continuous function on the whole metric space \cite[\S II.3.6]{bourbaki}, so there is a unique extension of $\ell$ to a uniformly continuous function $\ell:[a,b]\to S^1$. Since $[a,b]$ was arbitrary as a compact interval in $\mathbb{R}^+$, this means we have a continuous function
\[
\ell:\left(\mathbb{R},+\right)\longrightarrow\left(S^1,\times\right),
\]
which is still a group homomorphism, by a limiting process starting from the dense subset $\mathbb{Q}$. Finally, it is known \cite{mse} that all continuous homomorphisms from the real numbers under addition to the unit circle under multiplication must be of the form
\[
\ell(\alpha)=e^{ip\alpha},\qquad\alpha\in\mathbb{R},
\]
for some fixed real number $p$.

It remains to determine the nature of $p$. Since $\ell$ maps each rational number $1/m$ to an $m$th root of unity, say $\eta=e^{2\pi ik/m}$ for some $k\in\mathbb{Z}$, the fixed number $p$ must be in $2\pi\mathbb{Z}$, i.e. there exists $n\in\mathbb{Z}$ such that $\ell(\alpha)=e^{2n\pi i\alpha}$ for all $\alpha\in\mathbb{R}$. Since we already showed that $J_{\alpha}=\ell(\alpha)\cdot I_0^\alpha$ for all $\alpha\in\mathbb{Q}^+$, the same is true for all $\alpha\in\mathbb{R}^+$ by the continuity axiom. Thus, the claimed result is proved.
%Since the structure of complex roots of unity is well known, this means the following:
%\[
%\forall\;m\in\mathbb{N},\:\exists\;k\in\mathbb{N}\::\:J_{1/m}=e^{2\pi i k/m}\cdot I_{0}^{1/m}.
%\]
%By axiom (ii), the semigroup property, ee immediately have:
%\[
%\forall\;\tfrac{p}{q}\in\mathbb{Q}^+,\:\exists\;\tfrac{r}{q}\in\mathbb{Q}^+\::\:J_{p/q}=e^{2\pi ir/q}\cdot I_{0}^{p/q}.
%\]
%Let $\ell:\mathbb{Q}^+\to\mathbb{Q}^+$ be the map sending $\tfrac{p}{q}$ to $\tfrac{r}{q}$ for each $\tfrac{p}{q}\in\mathbb{Q}^+$ (since $r$ is not uniquely defined by the definition, we assume ***)
\end{proof}

\begin{remark}
The continuity axiom in Theorem \ref{Thm:CMnopos} is slightly strengthened compared with the continuity axiom in the original Theorem \ref{Thm:CMorig}: it is now assumed that the continuity is into the space $\mathcal{L}(E)$ with a topology that is metrisable, not only Hausdorff. This is needed for our proof of Theorem \ref{Thm:CMnopos}, because we used a fact about continuous mappings between metric spaces. However, we believe this is only a mild strengthening of the assumption, and very reasonable since the standard topology on $\mathcal{L}(E)$ (the one that would be assumed in most applications of our result) is not only metrisable but normable.

On the other hand, the results in the next section will assume a space $E$ which is a Fre\'echet space, not necessarily a Banach space, and therefore the standard topology on $\mathcal{L}(E)$ is not necessarily metrisable. Thus, Theorem \ref{Thm:CMnopos} will not easily extend to the later scenarios, and we will need to make do with a modified (weaker) version of the positivity assumption, along the lines of (iii*) above, rather than eliminating it entirely.
\end{remark}

\section{The result in Fr\'echet spaces of functions on $\mathbb{R}$} \label{Sec:Lploc}

\begin{thm} \label{Thm:Lploc}
Let $E$ be a space of functions on the real line with left-bounded support, either
\[
E = \Big\{ f \in L^p_{\mathrm{loc}}(\mathbb{R}) : \exists\, a \in \mathbb{R} \text{ such that } \operatorname{supp}(f) \subset [a,\infty) \Big\}
\]
for $1\leq p<\infty$, or
\[
E = \Big\{ f \in C^k(\mathbb{R}) : \exists\, a \in \mathbb{R} \text{ such that } \operatorname{supp}(f) \subset [a,\infty) \Big\}
\]
for $k\in\mathbb{Z}^+_0$. There exists a unique family $\{J_{\alpha}\}_{\alpha>0}$ of continuous linear operators on $E$ satisfying the following four axioms:
\begin{enumerate}[(i)]
\item $\displaystyle \big(J_1f\big)(x)=\int_{-\infty}^xf(t)\,\mathrm{d}t$ for every $f\in E$ and a.e. $x\in\mathbb{R}$.
\item Semigroup property: $J_{\alpha}J_{\beta}=J_{\alpha+\beta}$ for all $\alpha,\beta>0$.
\item Positivity: for every $\alpha>0$, if $f\in E$ and $f\geq0$, then $J_{\alpha}f\geq0$ a.e.
\item Continuity: the map $\alpha\mapsto J_{\alpha}$ is continuous from $(0,\infty)$ into the space $\mathcal{L}(E)$ under some Hausdorff topology weaker than the usual seminormed topology on this space.
\item Shift commutation: for every $\alpha > 0$ and every $\tau\in\mathbb{R}$,
    \[
    J^{\alpha} S_\tau = S_\tau J^{\alpha},
    \]
where the shift operators $S_\tau$ are defined by $\left(S_\tau f\right)(x) := f(x-\tau)$.
\end{enumerate}
The unique family satisfying these five axioms is the family of Riemann--Liouville fractional integrals with constant of integration $-\infty$, namely $J_{\alpha}f=I^{\alpha}_{-\infty}f=f*h_{\alpha}$ where the convolution is of Fourier type.
\end{thm}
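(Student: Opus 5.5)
I would follow the seven-fact structure of the proof of Theorem \ref{Thm:CMorig}, with the objects replaced as follows. The larger space is $A=\{g\in L^1_{\mathrm{loc}}(\mathbb{R}):\operatorname{supp}(g)\subset[0,\infty)\}$, given the Fréchet topology of local $L^1$ convergence. The smaller space is $E$ as in the statement. The operators are $R_gf=f*g$, a Fourier-type convolution, which for $f$ with support in $[a,\infty)$ reduces to $\int_a^x f(t)g(x-t)\,\mathrm{d}t$.

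\textbf{Existence.} The Riemann--Liouville family $I^\alpha_{-\infty}=R_{h_\alpha}$ satisfies the five axioms. Axioms (i) and (ii) follow from $h_\alpha*h_\beta=h_{\alpha+\beta}$. Positivity is clear. Shift commutation holds because convolution commutes with translations. For continuity, note that on each $K_n=[-n,n]$ only $g|_{[0,2n]}$ and $f|_{[-n,n]}$ matter, and $\alpha\mapsto h_\alpha|_{[0,2n]}$ is continuous into $L^1[0,2n]$.

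\textbf{Facts 1--4.} These are proved seminorm by seminorm using Young's inequality on compact intervals. Injectivity comes from Titchmarsh's theorem applied after translating supports to start at $0$. Fact 6 (roots of $R_1$ inside $\{R_g\}$) carries over verbatim from Titchmarsh. A little care is needed because $E$ as defined is an inductive-type union rather than literally a Fréchet space. I would either work with the subspaces $E_a=\{f:\operatorname{supp}f\subset[a,\infty)\}$, each of which is Fréchet, or interpret continuity seminorm-wise as in Remark \ref{Rem:LEfrechet}.

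\textbf{Fact 5 (main obstacle).} We need: if $T\in\mathcal{L}(E)$ commutes with $R_1$ and with all shifts $S_\tau$, then $T$ commutes with every $R_g$, $g\in A$. On $[0,1]$ this came from density of polynomials, since $R_{p}$ for a polynomial $p$ is a combination of powers of $R_1$ applied to derivatives, or equivalently the span of $\{h_n\}$ is dense. On $\mathbb{R}$, the restrictions of polynomials to $[0,\infty)$ are not closed under the operations we need. Concretely, $\{R_1^n\}$ only produces $g=h_n$, whose span is not dense in $A$, since $A$-convergence is only local but truncation is not available.

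This is where axiom (v) enters. Commutation with $S_\tau$ and with $R_1$ gives commutation with $R_{h_n(\cdot-\tau)}=S_\tau R_1^n$ for all $\tau\geq0$ and $n\in\mathbb{N}$. Differences such as $h_1-S_\tau h_1=\chi_{[0,\tau)}$ then yield indicator functions of intervals. More generally, $\operatorname{span}\{S_\tau h_n\}$ contains all compactly supported piecewise polynomials on $[0,\infty)$, and these are dense in $A$. With continuity of $g\mapsto R_g$ (Fact 2), which I would show is continuous into $\mathcal{L}(E)$ with its seminorm topology, we get $TR_g=R_gT$ for all $g\in A$. Closing this limit argument requires the sequential completeness and Hausdorffness from Remark \ref{Rem:LEfrechet}. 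I would be careful to use only sequential limits, because $A$ is metrisable.

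\textbf{Conclusion.} Each $J_{1/m}$ commutes with $J_1=R_{h_1}$ by the semigroup property and with shifts by axiom (v), so Fact 5 applies. Running the argument of Lemma \ref{lem:root_without_assuming_Rg} (Fact 7) then gives $J_{1/m}=\eta I^{1/m}_{-\infty}$ with $\eta^m=1$. Positivity, applied to any nonnegative $f$ with $I^{1/m}_{-\infty}f>0$ on a set of positive measure, forces $\eta=1$. The semigroup property extends this to $\mathbb{Q}^+$. Continuity in a Hausdorff topology, together with density of $\mathbb{Q}^+$ and uniqueness of limits, gives $J_\alpha=I^\alpha_{-\infty}$ for all $\alpha>0$.
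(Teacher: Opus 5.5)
Your proposal is essentially the paper's proof. It uses the same convolution framework and gets injectivity and Fact 6 from the absence of zero divisors (Titchmarsh after translating supports is the function-level version of the Schwartz theorem the paper invokes), obtains Fact 5 from axiom (v) via differences of shifted Heaviside kernels plus density and continuity of $g\mapsto R_g$, and then follows the $R_{h_1}T$ argument of Lemma \ref{lem:root_without_assuming_Rg}, positivity, the semigroup law and continuity. Two caveats apply equally to the paper: your claim that $\operatorname{span}\{h_n\}$ is not dense in your $A$ is false because the topology is local (take polynomials $p_N$ with $\|g-p_N\|_{L^1[0,N]}<1/N$, so $p_Nh_1\to g$ in every seminorm $q_K$), which means the original polynomial argument already gives Fact 5 on your $A$ from $TR_{h_1}=R_{h_1}T$ alone; and in the $C^k$ case the $R_{h_1}T$ argument evaluates $T$ at $h_1\notin E$, so it should instead use $S=R_{h_{k+2}}T$ with $h_{k+2}\in E$, for which $S^m=R_{h_{m(k+2)+1}}$, Fact 6 (needed for general $h_\alpha$, not only $h_1$) gives $Th_{k+2}=\eta\,h_{k+2+1/m}$, and cancelling the injective $R_{h_{k+2}}$ yields $T=\eta\,R_{h_{1/m}}$.
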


The proof of Theorem \ref{Thm:Lploc}, step by step, will take up the entirety of this section.

\begin{remark}
The fifth axiom for the family of operators in Theorem \ref{Thm:Lploc} is new and reflects a fundamental structural difference between the bounded-interval setting and the present framework.

One main difference in the proof, compared with the original Cartwright--McMullen approach, lies in the choice of a dense subset of the underlying function space. In the classical $L^1[0,1]$ setting, polynomials can be used as a dense subset, by the Stone--Weierstrass theorem. Although polynomials are still contained in $L^1_{\mathrm{loc}}(\mathbb{R})$, they are no longer dense in this space, because polynomials always grow at infinity and therefore cannot approximate bounded functions which are also locally integrable. One of the key new ideas in the current work lay in finding a new space of functions that is dense in $L^1_{\mathrm{loc}}(\mathbb{R})$ but simple enough to have the desired properties like the space of polynomials in the original Cartwright--McMullen proof. We chose to use the space of step functions, and with this choice, the uniqueness proof required the shift-commutation assumption that is stated in Theorem \ref{Thm:Lploc}.

It is not clear whether the shift-commutation assumption is genuinely necessary for uniqueness itself; however, it is essential for the proof strategy adopted here.
\end{remark}

\begin{remark}
The positivity axiom (iii) can, as discussed in section \ref{Sec:nopos}, be replaced by the following alternative version: \emph{
\begin{enumerate}
\item[(iii*)] Weak version of positivity: for every $\alpha>0$, there exists $f\in E$ such that $f>0$ a.e. and $J_\alpha f>0$ a.e.
\end{enumerate}}
\noindent Again, it will be clear from the structure of the proof below that this is sufficient.
\end{remark}

\subsection{Spaces and convolution structure}

Recall that the proof of Theorem \ref{Thm:CMorig} starts by choosing an ambient space $A$, a smaller function space $E$, and a family of convolution operators. Thus, this subsection is devoted to the setup of our function spaces and convolution, and all necessary properties which we will need.

The space $E$ is already stated in Theorem \ref{Thm:Lploc}. All convolution kernels $g\in A$ will be understood as distributions with left-bounded support, i.e. in the space
\[
\mathcal{D}'_{+}(\mathbb{R}) := \big\{ T \in \mathcal{D}'(\mathbb{R})\: : \:\exists\, a\in\mathbb{R}\ \text{with } \operatorname{supp}(T)\subset [a,\infty) \big\}.
\]
Specifically, we will take
\[
A=\mathcal{D}'_{+}(\mathbb{R})\cap L^1_{\mathrm{loc}}(\mathbb{R})= \Big\{ g \in L^1_{\mathrm{loc}}(\mathbb{R}) : \exists\, a \in \mathbb{R} \text{ such that } \operatorname{supp}(g) \subset [a,\infty) \Big\},
\]
and note that $E\subset A\subset\mathcal{D}'_{+}(\mathbb{R})$.

The convolution operator is defined as
\[
R_g : f \mapsto g * f,
\]
where $f\in E$ and $g\in A$, and where the convolution is taken to be of Fourier type. Note that, if $f\in E$ and $g\in A$, then both $f$ and $g$ have left-bounded support, say
\[
\operatorname{supp}(f) \subset [a,\infty), \qquad \operatorname{supp}(g) \subset [b,\infty)
\]
for some $a,b \in \mathbb{R}$. These assumptions affect the domain of integration for the Fourier-type convolution, and the improper integral from \eqref{convol:F} becomes an integral over a finite domain in this case:
\begin{equation}
\label{eq:fourier_convolution_reduced}
\left(f * g\right)(x) =
\begin{cases}
0, &\text{ if } x < a+b,
\\
\displaystyle \int_{b}^{x-a} f(x-t)g(t)\,\mathrm{d}t, &\text{ if } x \ge a+b.
\end{cases}
\end{equation}
Thus, we have
\[
\operatorname{supp}\left(f * g\right) \subset [a+b,\infty)=\operatorname{supp}(f)+\operatorname{supp}(g).
\]

It is straightforward to prove that Fourier-type convolution is commutative, associative, and bilinear wherever it is defined, either in a distributional sense on $\mathcal{D}'(\mathbb{R})$ or in the usual sense of integration on appropriate function spaces such as $E$ and $A$. Associativity of this convolution also shows that $R_{g_1} \circ R_{g_2} = R_{g_1 * g_2}$, again whenever these are defined.

Convolution of distributions is in general not defined, but in the space $\mathcal{D}'_{+}(\mathbb{R})$, the left-bounded support structure means that convolutions are well-defined. Indeed, we have the following result due to Schwartz.

\begin{propn}[{\cite[Chapter VI, Theorem XIV]{Schwartz}}] \label{Prop:intdom:Schwartz}
Fourier-type convolution turns $\mathcal{D}'_{+}(\mathbb{R})$ into a commutative ring, with the Dirac delta serving as multiplicative identity element, and this ring has no nonzero zero divisors: if $T,S \in \mathcal{D}'_{+}(\mathbb{R})$ and $T*S = 0$, then $T=0$ or $S=0$. This is precisely the statement that $\mathcal{D}'_{+}(\mathbb{R})$ is an integral domain under convolution.
\end{propn}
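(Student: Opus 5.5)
The plan has two parts. First I set up the convolution on $\mathcal{D}'_{+}(\mathbb{R})$ and check the ring axioms. Then I prove the absence of zero divisors by reducing to Titchmarsh's convolution theorem for continuous functions, the same tool used for facts 3, 4 and 6 in the proof of Theorem \ref{Thm:CMorig}.

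\textbf{Step 1: the convolution is well defined.} Let $\operatorname{supp}(T)\subset[a,\infty)$ and $\operatorname{supp}(S)\subset[b,\infty)$. For a test function $\varphi$ with $\operatorname{supp}\varphi\subset(-\infty,c]$, the set
\[
\{(x,y): x\ge a,\ y\ge b,\ x+y\le c\}
\]
is compact. So I fix cutoffs $\chi_1,\chi_2\in C^\infty$ that equal $1$ on a neighbourhood of $[a,\infty)$ and $[b,\infty)$ respectively and vanish far to the left. I then define
\[
\langle T*S,\varphi\rangle:=\big\langle T\otimes S,\ \chi_1(x)\chi_2(y)\varphi(x+y)\big\rangle .
\]
This pairing is independent of the choice of cutoffs and is continuous in $\varphi$. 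The support of $T*S$ lies in $[a+b,\infty)$, so $\mathcal{D}'_{+}(\mathbb{R})$ is closed under $*$. Bilinearity is immediate. Commutativity follows from the symmetry $T\otimes S\leftrightarrow S\otimes T$ under $(x,y)\mapsto(y,x)$. Associativity follows by writing both $(T*S)*U$ and $T*(S*U)$ as $T\otimes S\otimes U$ applied to $\varphi(x+y+z)$, cut off on the compact region of $[a,\infty)\times[b,\infty)\times[d,\infty)$ where $x+y+z\le c$. Finally $\delta*T=T$ is a direct check, and on $L^1_{\mathrm{loc}}$ functions this $*$ agrees with \eqref{eq:fourier_convolution_reduced}.

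\textbf{Step 2: reduction to continuous functions.} Suppose $T*S=0$ with $T\neq0$ and $S\neq0$. After shifting, which commutes with $*$, I may take $a=\inf\operatorname{supp}T$ and $b=\inf\operatorname{supp}S$ to be exact. I fix a bounded window $(-\infty,a+b+L)$. By the local structure theorem for distributions, on $(-\infty,c)$ the distribution $T$ is a finite-order derivative of a continuous function.

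Since $T$ has left-bounded support, its $m$-fold left primitive $F:=h_m*T$ is well defined by Step 1. For $m$ large enough, $F$ coincides on $(-\infty,c)$ with a continuous function. The reason is that $D^mF=T$ and $F$ vanishes to the left of $a$, so the polynomial ambiguity in the structure theorem must be zero. Similarly $G:=h_n*S$ is continuous on the window.

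$F$ is not identically zero on any interval $[a,a+\varepsilon]$. Otherwise $T=D^mF$ would vanish near $a$, contradicting $a=\inf\operatorname{supp}T$. The same holds for $G$ near $b$.

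By associativity and commutativity from Step 1,
\[
F*G=h_{m+n}*(T*S)=0 .
\]

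\textbf{Step 3: Titchmarsh.} I translate $F$ and $G$ so that their supports start at $0$. Titchmarsh's convolution theorem, applied to the continuous functions $F,G$ on $[0,L]$, says: if $F*G=0$ on $[0,L]$, then $F=0$ on $[0,\lambda]$ and $G=0$ on $[0,\mu]$ for some $\lambda+\mu\ge L$. In particular $F$ or $G$ vanishes on a nontrivial initial interval, which contradicts Step 2. Hence $T=0$ or $S=0$, and $\mathcal{D}'_{+}(\mathbb{R})$ is an integral domain.

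\textbf{Main obstacle.} The genuinely deep input is Titchmarsh's theorem itself, which I would cite rather than reprove. Within the argument, the delicate point is Step 2. There I must justify that the left primitive $h_m*T$ is an honest continuous function on a bounded window, so that the polynomial ambiguity vanishes. I must also justify that $a=\inf\operatorname{supp}T$ forces $F$ to be nonzero on every initial interval.
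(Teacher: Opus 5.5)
Your proof is correct. It differs from the paper in that the paper gives no argument at all: Proposition~\ref{Prop:intdom:Schwartz} is imported from Schwartz as a black box. You build the ring structure directly, using tensor products with cutoffs. You then reduce the zero-divisor statement to Titchmarsh's theorem by passing to iterated left primitives $F=h_m*T$ and $G=h_n*S$. These are continuous on a window near the left ends of their supports, do not vanish on any initial interval, and satisfy $F*G=h_{m+n}*(T*S)=0$. This is essentially the standard way the result is derived from Titchmarsh's theorem.

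The citation buys brevity; your route clarifies the logic of the paper. The paper presents the integral-domain property as the tool needed on $\mathbb{R}$ ``instead of'' Titchmarsh. Your argument shows that the deep input is exactly the same Titchmarsh theorem used for facts 3, 4 and 6 in the proof of Theorem~\ref{Thm:CMorig}. The only genuinely distributional ingredient is the regularisation in your Step 2, and that step is needed only for Theorem~\ref{Thm:D'}. For $f\in E$ and $g\in A$, your Step 3 applied directly to $L^1$ functions on bounded windows already gives Proposition~\ref{Prop:E_integral_domain}, since Titchmarsh's theorem holds in $L^1$. The same step, with induction on the number of factors, gives Lemma~\ref{lem:root_inside_Rg}. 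Neither needs $T_f$ or \eqref{eq:dist_conv_compatibility}.

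Two details deserve a line each.

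First, the polynomial ambiguity need not vanish. On the window, $F$ differs from a continuous function by a polynomial, which already makes $F$ continuous there. A cleaner route is to apply the structure theorem for compactly supported distributions to $\psi T$, where $\psi$ is a smooth cutoff equal to $1$ on $(-\infty,c]$ and vanishing beyond $c+1$. Then $h_m*(\psi T)$ is continuous for suitable $m$ and agrees with $F$ on $(-\infty,c)$.

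Second, $F$ and $G$ are only known to be functions on the window. You should say explicitly that on $(-\infty,a+b+L)$ the distribution $F*G$ equals the classical convolution of the continuous, compactly supported truncations $\theta_1F$ and $\theta_2G$. Here $\theta_1,\theta_2$ are cutoffs equal to $1$ slightly beyond $a+L$ and $b+L$ respectively, supported where $F,G$ are continuous. This holds because $(F-\theta_1F)*G$ and $\theta_1F*(G-\theta_2G)$ are supported to the right of the window.
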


We can use the above result to obtain the injectivity results corresponding to facts 3 and 4 from the proof of Theorem \ref{Thm:CMorig}. In the original result on $[0,1]$, injectivity was obtained via the Titchmarsh convolution theorem, and the final paragraph of the Cartwright--McMullen paper \cite{cartwright-mcmullen} acknowledges that the integral domain property of distributions will be needed instead in the context of function spaces on $\mathbb{R}$. However, before stating the new injectivity results explicitly, we need the following result showing that $R_g$ does indeed define a mapping in $\mathcal{L}(E)$ for any $g\in A$.

\begin{propn}[Continuity properties] \label{Prop:continuity}
The mapping
\[
E \times A \to E, \qquad (f,g) \mapsto f * g,
\]
is bilinear and separately continuous. This proves the analogue of facts 1 and 2 from the proof of Theorem \ref{Thm:CMorig}: namely, that $R_g : E \to E$ is a continuous linear operator for every $g\in A$ and the mapping $g\mapsto R_g$ is continuous from $A$ into $\mathcal{L}(E)$.
\end{propn}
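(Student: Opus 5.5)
We need to show three things: $E$ and $A$ are closed under convolution with each other (so $f*g\in E$), the map $(f,g)\mapsto f*g$ is continuous in each variable separately, and the stated consequences for $R_g$ and $g\mapsto R_g$. Bilinearity is immediate from \eqref{eq:fourier_convolution_reduced}. We also need the topology on $A$. Since the proposition does not fix one explicitly, we equip $A$ with the subspace topology from $L^1_{\mathrm{loc}}(\mathbb{R})$, i.e. the seminorms $\|g\|_{L^1(K_n)}$.

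\textbf{Well-definedness.} Fix $f\in E$ with $\operatorname{supp} f\subset[a,\infty)$ and $g\in A$ with $\operatorname{supp} g\subset[b,\infty)$. Take $x$ in a compact interval $[c,d]$. The formula \eqref{eq:fourier_convolution_reduced} only involves $f$ on $[a,d-b]$ and $g$ on $[b,d-a]$. Put $\tilde f=f\chi_{[a,d-b]}$ and $\tilde g=g\chi_{[b,d-a]}$. On $[c,d]$ we then have $f*g=\tilde f*\tilde g$, a convolution of compactly supported functions on $\mathbb{R}$.

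\emph{The $L^p$ case.} Young's inequality gives
\[
\|f*g\|_{L^p([c,d])}\le \|f\|_{L^p([a,d-b])}\,\|g\|_{L^1([b,d-a])}.
\]
So $f*g\in L^p_{\mathrm{loc}}$, and its support lies in $[a+b,\infty)$.

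\emph{The $C^k$ case.} For $f\in C^k$ with left-bounded support, differentiate under the integral in the form $\int g(t)f(x-t)\,dt$. The integrand is supported in $t\in[b,x-a]$, and $f$ together with its derivatives vanishes at the left endpoint of its support, so no boundary terms arise. Hence $(f*g)^{(j)}=f^{(j)}*g$ for $j\le k$. Continuity of each $f^{(j)}*g$ follows from uniform continuity of $f^{(j)}$ on compacts and $g\in L^1_{\mathrm{loc}}$. This gives
\[
\sup_{[c,d]}|(f*g)^{(j)}|\le \sup_{[a,d-b]}|f^{(j)}|\,\|g\|_{L^1([b,d-a])}.
\]
These local estimates are the analogue of fact 1.

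\textbf{Continuity: the main obstacle.} $E$ is not itself Fréchet; it is the union of the Fréchet spaces $E_a=\{f:\operatorname{supp} f\subset[a,\infty)\}$. Similarly $A=\bigcup_b A_b$. The estimates above show that, for fixed $b$ and $g\in A_b$, the map $f\mapsto f*g$ sends $E_a$ continuously into $E_{a+b}$ for every $a$. Symmetrically, for fixed $f\in E_a$, the map $g\mapsto f*g$ sends $A_b$ continuously into $E_{a+b}$. The difficulty is that a seminorm on $[c,d]$ of the output calls on $g$ (resp. $f$) over an interval whose left end depends on the support bound of the other factor.

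I would handle this by giving $E$ and $A$ their inductive limit topologies, i.e. a linear map out of them is continuous iff its restriction to each $E_a$ (resp. $A_b$) is continuous. Separate continuity then reduces exactly to the estimates above. This is the interpretation in which ``continuous linear operator on $E$'' and $\mathcal{L}(E)$ should be read. With the seminorm convention of the Preliminaries, the estimates already give continuity for each fixed support bound, and I would state this caveat explicitly.

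\textbf{Facts 1 and 2.} Continuity of $f\mapsto f*g$ is precisely $R_g\in\mathcal{L}(E)$. For $g\mapsto R_g$, I would use the topology on $\mathcal{L}(E)$ of uniform convergence on bounded sets, or simple convergence, as in Remark \ref{Rem:LEfrechet}. A bounded set $B\subset E$ lies in some $E_a$, with $\sup_{f\in B}\|f\|_{L^p([a,N])}$ finite for every $N$. The estimate above is uniform over $f\in B$, so
\[
\sup_{f\in B}\|R_{g_1}f-R_{g_2}f\|_{L^p([c,d])}\le C_B\,\|g_1-g_2\|_{L^1([b,d-a])}
\]
for $g_1,g_2\in A_b$, and likewise in the $C^k$ case. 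This proves continuity of $g\mapsto R_g$ on each $A_b$, hence on $A$.
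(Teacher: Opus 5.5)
Your local estimates match the paper's. The paper applies Minkowski's integral inequality directly to $\int_b^{x-a}f(x-t)g(t)\,\mathrm{d}t$, while you truncate to $f\chi_{[a,d-b]}$, $g\chi_{[b,d-a]}$ and apply Young. Both give $q_K(f*g)\le q_{[a,M-b]}(f)\,q_{[b,M-a]}(g)$, and in the $C^k$ case you also justify differentiation under the integral and continuity of $f^{(j)}*g$, which the paper only asserts.

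The genuine difference is the final step. The paper reads separate continuity straight off this inequality in ``the locally convex topology generated by the seminorm family $\{q_K\}$''. That is the topology $E$ and $A$ inherit from $L^p_{\mathrm{loc}}(\mathbb{R})$ (or $C^k(\mathbb{R})$) and $L^1_{\mathrm{loc}}(\mathbb{R})$. The paper overlooks that $K'=[a,M-b]$ and $K''=[b,M-a]$ depend on the support bound of the \emph{other} factor. Your caveat is therefore more than caution: in that subspace topology the proposition is false.
\begin{itemize}
\item Let $\psi$ be a smooth bump supported in $[-1,0]$ with $\int\psi=1$, and set $f_n=\psi(\cdot+n)$. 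Then $f_n\to0$ in that topology, but $R_{h_1}f_n=1$ on $[-n,\infty)$. So $R_{h_1}$, which is $J_1$ of Theorem \ref{Thm:Lploc}, is discontinuous.
\item Let $g_n=\chi_{[-n-1,-n]}$, so $g_n\to0$ in $A$, and fix $f\in E$ equal to $1$ on $[1,\infty)$. Then $f*g_n=1$ on $[1-n,\infty)$, so $g\mapsto f*g$ is discontinuous too.
\end{itemize}

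The strict inductive-limit topology you adopt ($E=\bigcup_n E_{-n}$, each step closed in the next, and likewise for $A$) is what makes the statement true. Your reduction to the Fr\'echet maps $E_a\to E_{a+b}$ and $A_b\to E_{a+b}$ is correct. The cost is that downstream uses must be re-read in this topology. Remark \ref{Rem:LEfrechet} assumes $E$ is Fr\'echet, which it is not. Density of step functions must also hold in the LF topology of $A$; it does, since Theorem \ref{Thm:stepdense} in fact approximates within a single $A_b$.

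Two small repairs are needed.
\begin{itemize}
\item Discard your opening choice of the subspace topology on $A$. The second example shows it is incompatible with continuity of $g\mapsto R_g$, and your later argument in fact uses the inductive-limit topology.
\item In the bounded-convergence topology on $\mathcal{L}(E)$, the seminorms are $\sup_{f\in B}q(Tf)$ for \emph{every} continuous seminorm $q$ of the LF space, not only $\|\cdot\|_{L^p([c,d])}$. Your estimate still suffices. A bounded $B$ lies, and is bounded, in some $E_a$ (Dieudonn\'e--Schwartz), and $R_{g_1-g_2}(B)\subset E_{a+b}$ for $g_1,g_2\in A_b$. The restriction of any such $q$ to the Fr\'echet space $E_{a+b}$ is dominated by $C\,q_{[-N,N]}$ for some $C$ and $N$.
\end{itemize}
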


\begin{proof}
Let $f\in E$ with $\operatorname{supp}(f) \subset [a,\infty)\}$, and $g \in A$ with $\operatorname{supp}(g) \subset [b,\infty)$, and let $K \subset \mathbb{R}$ be compact. We will bound the $K$-seminorm of $f*g$ (a norm considered just on the domain $K$) in terms of appropriate seminorms of $f$ and $g$ in their respective spaces $E$ and $A$. Bilinearity is clear.

Since $\operatorname{supp}(f*g) \subset [a+b,\infty)$, we may assume
\[ K \subset [a+b, M] \]
for some $M > a+b$.

Recall that there are two possible cases for $E$ in the statement of Theorem \ref{Thm:Lploc}: it can be the subset with left-bounded support of either $L^p_{\mathrm{loc}}(\mathbb{R})$ or $C^k(\mathbb{R})$. We must deal with these two cases separately.

Firstly, we consider the $L^p_{\mathrm{loc}}(\mathbb{R})$ case and assume $1\leq p<\infty$. For $x \in K$, we have
\[ (f * g)(x) = \int_{b}^{\,x-a} f(x-t)\, g(t)\, \mathrm{d}t \le \int_{b}^{\,x-a} |f(x-t)|\, |g(t)|\, \mathrm{d}t. \]
Integrating over $x \in K$ and applying Minkowski's integral inequality \cite[Theorem 6.19]{folland} yields
\begin{align*}
\|f * g\|_{L^p(K)}
&= \left( \int_K \left| \int_{b}^{\,x-a} f(x-t)\, g(t)\, \mathrm{d}t \right|^p \,\mathrm{d}x \right)^{1/p} \\
&\le \int_{b}^{\,M-a} |g(t)| \left( \int_K |f(x-t)|^p \,\mathrm{d}x \right)^{1/p} \,\mathrm{d}t.
\end{align*}
For each fixed $t \in [b, M-a]$, the change of variables $u = x - t$ gives
\[ \left( \int_K |f(x-t)|^p \,\mathrm{d}x \right)^{1/p}
= \left( \int_{K-t} |f(u)|^p \,\mathrm{d}u \right)^{1/p}. \]
Since $\operatorname{supp}(f) \subset [a,\infty)$, we may restrict the domain of integration:
\[ \int_{K-t} |f(u)|^p \,\mathrm{d}u
=
\int_{(K-t)\cap[a,\infty)} |f(u)|^p \,\mathrm{d}u. \]
Now $K \subset [a+b, M]$ implies $K - t \subset [a+b-t,\, M-t]$, and since $t \ge b$ we have $M - t \le M - b$. Therefore,
\[ (K - t) \cap [a,\infty) \subset [a,\, M-b]. \]
Define the compact sets
\[ K' := [a,\,M-b]\qquad\text{ and }\qquad K'' := [b,\,M-a]. \]
Then, for all $t \in K''$, we have
\[ \left( \int_K |f(x-t)|^p \,\mathrm{d}x \right)^{1/p}
=
\|f\|_{L^p((K-t)\cap[a,\infty))}
\le \|f\|_{L^p(K')} = q_{K'}(f). \]
Consequently,
\begin{equation}
\label{eq:local_boundedness_kernel}
q_K(f * g) =\|f * g\|_{L^p(K)} \le q_{K'}(f)\cdot \int_{b}^{\,M-a} |g(t)|\,\mathrm{d}t = q_{K'}(f)\cdot q_{K''}(g),
\end{equation}
where each $q$ is defined according to the appropriate function space, namely $q_{K}(f*g)$ and $q_{K'}(f)$ as the $L^p$-seminorms for $f,f*g\in E$ and $q_{K''}(g)$ as the $L^1$-seminorm for $g\in A$.

Thus, we have separate continuity as stated, i.e. both continuity of each $R_g$ and continuity of the mapping $g\mapsto R_g$ with respect to the locally convex topology generated by the seminorm family $\{q_K\}$. This completes the proof in the case that $E$ is the left-bounded support subset of $L^p_{\mathrm{loc}}(\mathbb{R})$ with $1\leq p<\infty$.

Secondly, we consider the case of $L^p_{\mathrm{loc}}(\mathbb{R})$ with $p=\infty$. Then, the inequality involving seminorms is even easier to obtain:
\[
(f * g)(x) \le \int_{b}^{\,x-a} |f(x-t)|\, |g(t)|\, \mathrm{d}t \le \esssup_{[a,x-b]}\big|f\big|\cdot\int_{b}^{\,x-a} |g(t)|\, \mathrm{d}t,
\]
therefore
\[
q_K(f*g)=\esssup_{K}\big|f*g\big| \le \esssup_{[a,M-b]}\big|f\big|\cdot\int_{b}^{\,M-a} |g(t)|\, \mathrm{d}t = q_{K'}(f)\cdot q_{K''}(g),
\]
and the remainder of the proof goes exactly as before.

% $K' = [a, M-b]$ depends on $K$, on the support bound $b$ of $g$, and on the fixed support threshold $a$ of the subspace $E_a$ under consideration.
%
%\medskip
%\noindent\textbf{Continuity.}
%The estimate \eqref{eq:local_boundedness_kernel} shows that for each fixed $g \in A$ and each compact $K \subset \mathbb{R}$, there exist a compact $K' \subset \mathbb{R}$ and a constant $C_{K,g} > 0$ such that
%
%\[ q_K(R_g f) \le C_{K,g}\, q_{K'}(f) \qquad \text{for all } f \in E. \]

Thirdly, we consider the case of $C^k(\mathbb{R})$. Note that
\[
(f*g)'(x)=\frac{\mathrm{d}}{\mathrm{d}x}\int_{-\infty}^{\infty}f(x-t)g(t)\,\mathrm{d}t=\int_{-\infty}^{\infty}f'(x-t)g(t)\,\mathrm{d}t=(f'*g)(x),
\]
and similarly
\[
(f*g)^{(n)}=f^{(n)}*g
\]
for any $n\in\mathbb{N}$ such that $f$ is $n$ times differentiable. Thus,
\[
q_K(f*g) = \sup_{K}\big|f*g\big|+\sup_{K}\big|f'*g\big|+\sup_{K}\big|f''*g\big|+\cdots+\sup_{K}\big|f^{(k)}*g\big|,
\]
and then the remainder of the proof is the same as in the case of $L^p_{\mathrm{loc}}(\mathbb{R})$ with $p=\infty$.
\end{proof}

Next, in order to use Proposition \ref{Prop:intdom:Schwartz} in our function space setting, we should recall how functions are interpreted as distributions. Every $f \in L^p_{\mathrm{loc}}(\mathbb{R})\subset L^1_{\mathrm{loc}}(\mathbb{R})$ defines a distribution $T_f \in \mathcal{D}'(\mathbb{R})$ by
\begin{equation}
\label{eq:dist_associated_to_f}
T_f(\varphi) := \int_{\mathbb{R}} f(x)\,\varphi(x)\,dx,
\qquad \varphi \in \mathcal{D}(\mathbb{R}).
\end{equation}
Moreover, if $\operatorname{supp}(f)\subset [a,\infty)$, then $T_f$ vanishes for every test function $\varphi$ supported in $(-\infty,a]$, and therefore
\[
\operatorname{supp}(T_f)\subset [a,\infty).
\]
In the distributional sense, we have
\begin{equation} \label{eq:supp_Tf_in_supp_f}
\operatorname{supp}(T_f)=\operatorname{supp}(f).
\end{equation}
The convolutions are also compatible: if $f,g \in L^1_{\mathrm{loc}}(\mathbb{R})$ are such that their Fourier-type convolution $f*g$ is well-defined almost everywhere as a function, then the associated distributions satisfy
\begin{equation}
\label{eq:dist_conv_compatibility}
T_{f*g} \;=\; T_f * T_g \qquad \text{in } \mathcal{D}'(\mathbb{R}).
\end{equation}
In particular, if the function $f*g = 0$ almost everywhere, then $T_f * T_g = 0$ as a distribution.

\begin{propn}[Injectivity properties] \label{Prop:E_integral_domain}
Let $f \in E$, $g\in A$, and assume that $f*g = 0$ a.e. on $\mathbb{R}$. Then either $f = 0$ a.e. on $\mathbb{R}$ or $g = 0$ a.e. on $\mathbb{R}$. This proves the analogue of facts 3 and 4 from the proof of Theorem \ref{Thm:CMorig}: namely, that $R_g : E \to E$ is injective for every $g\neq0$ and the mapping $g\mapsto R_g$ is injective from $A$ into $\mathcal{L}(E)$.
\end{propn}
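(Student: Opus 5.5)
The plan is to pass from functions to distributions and apply Proposition \ref{Prop:intdom:Schwartz}. Let $f\in E$ with $\operatorname{supp}(f)\subset[a,\infty)$ and $g\in A$ with $\operatorname{supp}(g)\subset[b,\infty)$. Since $E\subset L^1_{\mathrm{loc}}(\mathbb{R})$ and $A\subset L^1_{\mathrm{loc}}(\mathbb{R})$, both define distributions $T_f,T_g$ through \eqref{eq:dist_associated_to_f}. By the remark following that formula (or by \eqref{eq:supp_Tf_in_supp_f}), $\operatorname{supp}(T_f)\subset[a,\infty)$ and $\operatorname{supp}(T_g)\subset[b,\infty)$. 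Hence $T_f,T_g\in\mathcal{D}'_{+}(\mathbb{R})$.

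Next I use that the Fourier-type convolution $f*g$ is well defined almost everywhere as a function. This follows from the reduced formula \eqref{eq:fourier_convolution_reduced}, which integrates over a bounded range; alternatively, Proposition \ref{Prop:continuity} shows $f*g\in E\subset L^1_{\mathrm{loc}}$. The compatibility relation \eqref{eq:dist_conv_compatibility} then gives $T_f*T_g=T_{f*g}$. If $f*g=0$ a.e., then $T_{f*g}=0$, so $T_f*T_g=0$ in $\mathcal{D}'_{+}(\mathbb{R})$. Proposition \ref{Prop:intdom:Schwartz} now yields $T_f=0$ or $T_g=0$.

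To return to functions, I invoke the fundamental lemma of the calculus of variations: if $h\in L^1_{\mathrm{loc}}(\mathbb{R})$ satisfies $\int h\varphi=0$ for all $\varphi\in\mathcal{D}(\mathbb{R})$, then $h=0$ a.e. So $T_f=0$ implies $f=0$ a.e., and likewise for $g$. This is the main step to state carefully, but it is standard. One could prove it by mollification, or by density of $\mathcal{D}$ in $L^1$ of compact sets applied to $\operatorname{sgn}(h)$ cut off to a compact interval.

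Finally I derive the two operator statements. For $g\neq0$, if $R_gf=g*f=0$ then $f=0$ a.e., so $R_g$ is injective; this is fact 3. For fact 4, suppose $R_{g_1}=R_{g_2}$ in $\mathcal{L}(E)$. By bilinearity $R_{g_1-g_2}=0$, with $g_1-g_2\in A$. Apply this to any nonzero $f\in E$, for instance the indicator function of $[0,1]$ in the $L^p$ case or a smooth bump in the $C^k$ case. Then $(g_1-g_2)*f=0$ with $f\neq0$, so $g_1=g_2$ a.e. The only subtlety I anticipate is confirming that \eqref{eq:dist_conv_compatibility} applies here. That amounts to a Fubini argument, which is justified because $\int\!\!\int|f(x-t)||g(t)||\varphi(x)|\,\mathrm{d}t\,\mathrm{d}x<\infty$ by the left-bounded supports and the compact support of $\varphi$.
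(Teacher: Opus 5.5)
Your proposal is correct and follows essentially the same route as the paper: you regard $f$ and $g$ as distributions in $\mathcal{D}'_{+}(\mathbb{R})$, use the compatibility $T_{f*g}=T_f*T_g$ together with Schwartz's integral-domain property, and then return to functions because $\int f\varphi=0$ for all test functions $\varphi$ forces $f=0$ a.e. Your additions are sound refinements that the paper leaves implicit: the Fubini justification of compatibility, the observation that $g\in A\subset L^1_{\mathrm{loc}}(\mathbb{R})$ (the paper writes $g\in E$), and the explicit derivation of facts 3 and 4.
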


\begin{proof}
Since $f,g \in E \subset L^p_{\mathrm{loc}}(\mathbb{R}) \subset L^1_{\mathrm{loc}}(\mathbb{R})$, they define
distributions $T_f, T_g \in \mathcal{D}'(\mathbb{R})$ by \eqref{eq:dist_associated_to_f}.
The support assumptions on $f$ and $g$ imply that $T_f, T_g \in \mathcal{D}'_{+}(\mathbb{R})$ by \eqref{eq:supp_Tf_in_supp_f}.
If $f*g = 0$ almost everywhere, then $T_{f*g}=0$.
By the compatibility \eqref{eq:dist_conv_compatibility}, we have

\[ T_f * T_g = T_{f*g} = 0 \qquad \text{in } \mathcal{D}'(\mathbb{R}). \]
Since $\mathcal{D}'_{+}(\mathbb{R})$ is an integral domain under convolution, it follows that either $T_f = 0$ or $T_g = 0$. But $T_f=0$ means $\int f\varphi = 0$ for all $\varphi \in \mathcal{D}(\mathbb{R})$, hence $f=0$ almost everywhere;
similarly $T_g=0$ implies $g=0$ almost everywhere.
\end{proof}

\subsection{Commutation with Convolution Operators}
\label{subsec:commutation}

Up to now, we have established the analogues of facts 1 to 4 from the proof of Theorem \ref{Thm:CMorig}. In the original Cartwright--McMullen work, proving fact 5 relied on the Stone--Weierstrass theorem and the denseness of polynomials in $L^1[0,1]$. Now, the ambient space $A$ has changed from $L^1[0,1]$ to $L^1_{\mathrm{loc}}(\mathbb{R})$, in which polynomials are no longer dense (for example, any function that is bounded everywhere and non-constant cannot be approximated on the whole of $\mathbb{R}$ by polynomials). Thus, we need a different argument instead of using the Stone--Weierstrass theorem. The best way we found to get around this issue was by using step functions, which are dense in $L^1_{\mathrm{loc}}(\mathbb{R})$, but then for the commutativity result we need an extra assumption. This is why the fifth axiom appeared in our Theorem \ref{Thm:Lploc}; we do not know if it is mathematically necessary for the truth of the result, but it is necessary for the method of proof that we have constructed.

First we need the following known result.

\begin{lem}[{\cite[Chapter 7, Proposition 10]{royden}}] \label{thm:step_approx}
Let \([a,b]\) be a closed, bounded interval and \(1 \le p < \infty\). Then the subspace of step functions on \([a,b]\) is dense in \(L^p{[a,b]}\). 
\end{lem}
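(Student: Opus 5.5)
We need to prove the density of step functions in $L^p[a,b]$ for $1\le p<\infty$. The plan is the standard three-stage approximation: reduce to simple functions, then to indicator functions of measurable sets, then to indicators of finite unions of intervals, each of which is a step function.

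\textbf{First stage: simple functions.} Fix $f\in L^p[a,b]$ and $\varepsilon>0$. Splitting $f$ into real and imaginary parts, and each of these into positive and negative parts, reduces the problem to nonnegative $f$. For such $f$ there is an increasing sequence of nonnegative simple functions $s_n\uparrow f$ pointwise a.e. Since $|f-s_n|^p\le|f|^p\in L^1$, dominated convergence gives $\|f-s_n\|_p\to0$. So it suffices to approximate each simple function, and by linearity and the triangle inequality for $\|\cdot\|_p$, each indicator $\chi_E$ of a measurable set $E\subset[a,b]$.

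\textbf{Second stage: indicators of measurable sets.} This step uses the regularity of Lebesgue measure, and is the only one with any real content. Given $\delta>0$, outer regularity supplies an open set $O\supset E$ with $m(O\setminus E)<\delta/2$. Write $O$ as a countable disjoint union of open intervals $\bigcup_j I_j$. Because $\sum_j m(I_j)=m(O)<\infty$, we can choose $N$ with $\sum_{j>N}m(I_j)<\delta/2$. Set $U=\bigcup_{j\le N}I_j\cap[a,b]$. Then
\[
m(E\,\triangle\,U)\le m(O\setminus E)+m\Big(\bigcup_{j>N}I_j\Big)<\delta .
\]
Since the difference of the two indicators has absolute value $1$ exactly on $E\triangle U$,
\[
\|\chi_E-\chi_U\|_p=m(E\,\triangle\,U)^{1/p}<\delta^{1/p}.
\]
Here $\chi_U$ is a finite sum of indicators of subintervals of $[a,b]$, so it is a step function.

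\textbf{Conclusion.} Combining the two stages with the triangle inequality yields a step function within $\varepsilon$ of $f$ in $L^p[a,b]$. The restriction $p<\infty$ is used in two places: in the dominated convergence step, and in the fact that $m(E\triangle U)^{1/p}\to0$ as $\delta\to0$. Both fail for $p=\infty$, which is consistent with step functions not being dense in $L^\infty[a,b]$.
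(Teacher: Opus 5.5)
Your proof is correct and is essentially the standard argument of the cited source (Royden--Fitzpatrick), which the paper quotes without proof: density of simple functions via monotone simple approximation and dominated convergence, then approximation of each $\chi_E$ by the indicator of a finite union of intervals using outer regularity. One small refinement to your closing remark: the dominated-convergence step failing at $p=\infty$ is only a failure of that particular argument, since simple functions remain dense in $L^\infty[a,b]$ by uniform approximation, so the real obstruction is the second one, namely $\|\chi_E-\chi_U\|_\infty=1$ whenever $m(E\triangle U)>0$.
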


We adapt this density result on a compact interval to a density result on the whole of $\mathbb{R}$ as follows.

\begin{thm} \label{Thm:stepdense}
Step functions with left-bounded support are dense in the following space:
\[
A := \big\{ f \in L^1_{\mathrm{loc}}(\mathbb{R}) : \operatorname{supp}(f)\subset [a,\infty)
\text{ for some } a\in\mathbb{R} \big\},
\]
with respect to the locally convex topology induced by the seminorms
\[
q_K(f):=\|f\|_{L^1(K)}, \qquad K\subset\mathbb{R} \text{ compact}.
\]
\end{thm}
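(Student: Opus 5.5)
The plan is to approximate $f$ on an exhausting sequence of compact intervals by step functions, using Lemma \ref{thm:step_approx} with $p=1$, and to glue the local approximants so that they still have left-bounded support. Fix $f\in A$ with $\operatorname{supp}(f)\subset[a,\infty)$. A neighbourhood basis of $f$ in the locally convex topology is given by sets of the form $\{g : q_{K}(f-g)<\varepsilon\}$. Here the seminorms $q_K$ are monotone in $K$, so a single $K$ can be taken in place of finitely many. It therefore suffices to show the following: for every compact $K\subset\mathbb{R}$ and every $\varepsilon>0$ there is a step function $s$ with left-bounded support such that $\|f-s\|_{L^1(K)}<\varepsilon$. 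Equivalently, it suffices to produce a sequence $(s_n)$ with $q_{K_n}(f-s_n)<1/n$ for $K_n=[-n,n]$; then $s_n\to f$ in every seminorm, because each compact $K$ lies in some $K_N$.

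Given a compact $K$, choose $M$ with $K\subset[\min(a,\inf K),M]$ and set $[c,M]:=[\min(a,\inf K)-1,M]$, a closed bounded interval containing both $K$ and the left end $a$ of the support. The restriction $f|_{[c,M]}$ lies in $L^1[c,M]$ because $f\in L^1_{\mathrm{loc}}(\mathbb{R})$. Lemma \ref{thm:step_approx} gives a step function $\sigma$ on $[c,M]$ with $\|f-\sigma\|_{L^1[c,M]}<\varepsilon$.

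Define $s:=\sigma\cdot\chi_{[a,M]}$, extended by zero outside $[c,M]$. This $s$ is still a finite linear combination of indicator functions of intervals, so it is a step function on $\mathbb{R}$. Its support lies in $[a,M]$, so it is left-bounded, and $s\in A$. Since $f=0$ a.e. on $[c,a)$, truncating can only decrease the error:
\[
\|f-s\|_{L^1[c,M]}=\|f-\sigma\|_{L^1[a,M]}\le\|f-\sigma\|_{L^1[c,M]}<\varepsilon .
\]
Because $K\subset[c,M]$, this yields $q_K(f-s)<\varepsilon$.

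There is no real obstacle in this argument. The one point needing care is that the approximants must lie in $A$, meaning they must have left-bounded support. The truncation by $\chi_{[a,M]}$ secures this, and the support of $f$ ensures that the truncation does not worsen the error. One should also note that the compactly supported step functions built this way are exactly the "step functions with left-bounded support" meant in the statement, and that reducing to a single seminorm is legitimate because the seminorms are monotone in $K$.
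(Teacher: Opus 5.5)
Your proof is correct, but it is organised differently from the paper's.

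\emph{The paper's route.} The paper partitions $[a,\infty)$ into the compact intervals $[a,a+1],[a+1,a+10],[a+10,a+100],\dots$. On the $n$th piece it approximates $f$ by a step function $s_n$ supported there, with error $\varepsilon/2^n$. It then glues these into a single $s=\sum_n s_n$. Only finitely many $s_n$ meet any compact set, so this one $s$ satisfies $q_K(f-s)<\varepsilon$ for every compact $K$ at once; in fact $\|f-s\|_{L^1(\mathbb{R})}<\varepsilon$.

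\emph{Your route.} You use monotonicity of $q_K$ in $K$ to reduce density to a single seminorm. You then apply Lemma~\ref{thm:step_approx} once on an interval $[c,M]\supset K$ and truncate by $\chi_{[a,M]}$. Since $f=0$ a.e.\ on $[c,a)$, the truncation costs nothing.

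\emph{What each buys.} The paper's construction gives an approximant that is uniform over all $K$, which is more than density requires. However, that approximant generally has infinitely many (locally finite) steps, so ``step function'' must be read in that extended sense. Your argument is shorter and produces a smaller dense class: compactly supported step functions with finitely many steps. This is exactly what the commutation argument in Theorem~\ref{Thm:commutingRg} needs. There, $T$ must be passed through the expansion of $R_gf$ into shifts of $R_{h_1}f$. For a finite sum this needs only linearity of $T$. An infinite sum would also require convergence of the series in $E$ and continuity of $T$, which the paper does not spell out.

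One cosmetic point: take $M\ge\max(a,\sup K)$ so that $[c,M]$ really contains $a$, as you assert. If $M<a$ the argument still works, since then $s=0$ and $f=0$ a.e.\ on $K$, but the sentence as written would be inaccurate.
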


\begin{proof}

Let $f \in L^1_{\mathrm{loc}}(\mathbb{R})$ with $\operatorname{supp}(f) \subseteq [a,\infty)$, and fix $\varepsilon > 0$. We aim to approximate $f$ by a step function $s \in L^1_{\mathrm{loc}}(\mathbb{R})$ with the same support such that
\[ \|f - s\|_{L^1(K)} < \varepsilon \]
for all compact sets $K \subset \mathbb{R}$.

\medskip

\textbf{Step 1.} Consider the half-line $[a,\infty)$, and cover it by a sequence of compact intervals, for example:
\[ [a,a+1], \ [a+1,a+10], \ [a+10,a+100], \ \ldots \]
Denote these intervals by $K_n = [a + 10^{n-2}, \, a + 10^{n-1}]$ for $n\geq2$, $K_1=[a,a+1]$.

\medskip

\textbf{Step 2.} For the first interval $K_1 = [a,a+1]$, find a step function $s_1$ supported in $K_1$ such that
\[ \|f - s_1\|_{L^p(K_1)} < \frac{\varepsilon}{2}. \]
For the second interval $K_2 = [a+1,a+10]$, approximate $f$ by a step function $s_2$ supported in $K_2$ such that
\[ \|f - s_2\|_{L^p(K_2)} < \frac{\varepsilon}{4}. \]
In general, for each interval $K_n$, we find a step function $s_n$ supported in $K_n$ such that
\[ \|f - s_n\|_{L^p(K_n)} < \frac{\varepsilon}{2^n}. \]
This is possible for every $n$ by Lemma \ref{thm:step_approx}, since every $K_n$ is compact. Each step function $s_n$ is supported on a compact interval and is $L^p$-integrable on that interval, hence $s_n \in L^p_{\mathrm{loc}}(\mathbb{R})$.

\medskip

\textbf{Step 3.} Define the function
\[ s := \sum_{n=1}^{\infty} s_n . \]
This is well-defined and supported in $[a,\infty)$, because for each $x \ge a$, there exists a unique index $n(x)$ such that $x \in K_{n(x)}$, and therefore $s_m(x)=0$ for all $m \neq n(x)$, giving $s(x) = s_{n(x)}(x)$, so the series is actually a finite sum at each point. In particular, $s \in L^p_{\mathrm{loc}}(\mathbb{R})$ and $\operatorname{supp}(s) \subseteq [a,\infty)$.

Let $K \subset \mathbb{R}$ be compact. Then $K \cap [a,\infty)$ is bounded, so there exists $N\in\mathbb{N}$ such that
\[ K \cap [a,\infty) \subset \bigcup_{n=1}^N K_n. \]
Then, $s_n=0$ almost everywhere on $K$ for all $n>N$, and hence $s = \sum_{n=1}^N s_n$ almost everywhere on $K$. Therefore,
\[ \begin{aligned}
\|f - s\|_{L^p(K)}
&= \left\|\, f - \sum_{n=1}^N s_n \,\right\|_{L^p(K)} \le \sum_{n=1}^N \|f - s_n\|_{L^p(K\cap K_n)} \\ &\le \sum_{n=1}^N \|f - s_n\|_{L^p(K_n)} < \sum_{n=1}^N \frac{\varepsilon}{2^n} < \varepsilon. \end{aligned} \]
Since $s$ is a step function with left-bounded support and $K\subset\mathbb{R}$ is an arbitrary compact subset, the proof is complete.
\end{proof}

Armed with the above result, we are able to establish the following.

\begin{thm} \label{Thm:commutingRg}
If $T\in\mathcal{L}(E)$ commutes with all shift operators (satisfies axiom (v) in the statement of Theorem \ref{Thm:Lploc}), then $TR_{h_1}=R_{h_1}T$ implies $TR_g=R_gT$ for every $g\in A$, where $E$ and $A$ are as defined earlier in this section and $h_1$ is the Heaviside function according to \eqref{h:fns}. This is the analogue of fact 5 from the proof of Theorem \ref{Thm:CMorig}.
\end{thm}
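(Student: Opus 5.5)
The plan is to prove the commutation on a dense set of kernels and then pass to all of $A$ by continuity. The dense set will be the step functions with left-bounded support from Theorem \ref{Thm:stepdense}.

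The first step is to express indicator kernels through $R_{h_1}$ and shifts. For $c<d$, the indicator $\chi_{[c,d)}$ equals $h_1(\cdot-c)-h_1(\cdot-d)$ almost everywhere. Since Fourier-type convolution with a translated kernel is a translate of the convolution, we have
\[
R_{\chi_{[c,d)}}f=S_c\big(h_1*f\big)-S_d\big(h_1*f\big)=S_cR_{h_1}f-S_dR_{h_1}f,
\]
that is, $R_{\chi_{[c,d)}}=(S_c-S_d)R_{h_1}$. The shifts $S_\tau$ clearly map $E$ into $E$ continuously, because a compact $K$ is simply translated to $K-\tau$ in the seminorms. Now suppose $T$ commutes with every $S_\tau$ and with $R_{h_1}$. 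Then
\[
TR_{\chi_{[c,d)}}=(S_c-S_d)TR_{h_1}=(S_c-S_d)R_{h_1}T=R_{\chi_{[c,d)}}T.
\]
Finite linear combinations of such indicators give all step functions with compact support, so $T$ commutes with $R_s$ for every such $s$, because $g\mapsto R_g$ is linear.

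The second step handles step functions that have left-bounded but unbounded support, such as the $s$ built in the proof of Theorem \ref{Thm:stepdense}. Fix $f\in E$ and a compact $K$. By the support estimate \eqref{eq:fourier_convolution_reduced}, the values of $s*f$ on $K$ depend only on $s$ restricted to a bounded interval. However, $T$ is not local, so I will not argue pointwise. Instead, I write $s$ as the $A$-limit of its truncations $s\chi_{(-\infty,N)}$, which are compactly supported step functions; this convergence is immediate in every seminorm $q_K$. This reduces the second step to the general density argument below.

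The third step is the density and continuity argument. Let $g\in A$. By Theorem \ref{Thm:stepdense}, combined with truncation, choose compactly supported step functions $s_j\to g$ in $A$; we can take a sequence because the topology of $A$ is given by countably many seminorms $q_{[-n,n]}$. By Proposition \ref{Prop:continuity}, $R_{s_j}\to R_g$ in $\mathcal{L}(E)$. Concretely, the estimate \eqref{eq:local_boundedness_kernel} gives $q_K\big((s_j-g)*u\big)\le q_{K'}(u)\,q_{K''}(s_j-g)$. Here $K'$ depends on the support bounds $a$ of $u$ and $b$ of the kernels, and I will fix one $b$ below the supports of $g$ and all the $s_j$, which is possible since the construction preserves the support bound. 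Fix $f\in E$. Then $R_{s_j}Tf\to R_gTf$ directly from this estimate applied to $u=Tf\in E$. Also $R_{s_j}f\to R_gf$ in $E$, and since $T$ is continuous, $TR_{s_j}f\to TR_gf$. Because $E$ is Hausdorff, the limits of the equal sequences $TR_{s_j}f=R_{s_j}Tf$ coincide, so $TR_gf=R_gTf$.

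The main obstacle is this final limit. The inductive-limit nature of $E$ means the topology is subtle: the support bound of $Tf$ is not controlled a priori, and convergence $R_{s_j}f\to R_gf$ has to be taken in a topology where $T$ is continuous. I handle this by working pointwise in $f$, using only sequential continuity of $T$ on sequences supported in a fixed half-line. The single fixed $f$ guarantees this, since all the $R_{s_j}f$ are supported in $[a+b,\infty)$. I also use the separate continuity estimate applied to the single function $Tf$ rather than uniform operator convergence. If needed, I will also state explicitly the convention that the topology on $E$ is the one induced by the seminorms $q_K$ of the ambient Fréchet space, so that each of these limits is meaningful.
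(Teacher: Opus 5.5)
Your proposal is correct and follows essentially the same route as the paper: you obtain commutation for step kernels from the identity $R_{\chi_{[c,d)}}=(S_c-S_d)R_{h_1}$ together with shift-commutation, and then extend to all $g\in A$ using the density of step functions (Theorem \ref{Thm:stepdense}) and the continuity estimate of Proposition \ref{Prop:continuity}. Your truncation to compactly supported step functions and your fixed-$f$ limit argument with a common support bound are cleaner than the paper's version. The paper applies $T$ term-by-term to a possibly infinite sum of shifted terms, and it passes to the limit in $\mathcal{L}(E)$ without saying in what sense the convergence holds.
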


\begin{proof}
Let $f\in E$ and consider an arbitrary step function $g$ with left-bounded support as follows:
\[ g(x)=\sum_{k=1}^{\infty} c_k\,\chi_{(\alpha_k,b_k)}(x),
\qquad (\alpha_k, b_k) \subset (0, \infty) . \]
Then, for any $x\in\mathbb{R}$,
\[ (R_g f)(x)=(g*f)(x)=\sum_{k=1}^n c_k \int_{\alpha_k}^{b_k} f(x-t)\,\mathrm{d}t. \]
With the change of variables $u=x-t$, we get
\[ \int_{\alpha_k}^{b_k} f(x-t)\,\mathrm{d}t
=\int_{x-b_k}^{x-\alpha_k} f(u)\,\mathrm{d}u
=\int_{-\infty}^{x-\alpha_k} f(u)\,\mathrm{d}u-\int_{-\infty}^{x-b_k} f(u)\,\mathrm{d}u. \]
Recalling that
\[ (R_{h_1} f)(x)=\int_{0}^{\infty} f(x-t)\,\mathrm{d}t=\int_{-\infty}^{x} f(u)\,\mathrm{d}u, \]
and therefore
\[
(S_cR_{h_1} f)(x)=\int_{-\infty}^{x-c} f(u)\,\mathrm{d}u,\qquad c\in\mathbb{R},
\]
for any $c\in\mathbb{R}$ where $S_c$ is a shift operator as in Theorem \ref{Thm:Lploc}, we may rewrite the above as
\[ \int_{\alpha_k}^{b_k} f(x-t)\,\mathrm{d}t
=(R_{h_1} f)(x-\alpha_k)-(R_{h_1} f)(x-b_k)=(S_{\alpha_k} R_{h_1} T f)(x) - (S_{b_k} R_{h_1} T f)(x). \]
Hence,
\[ R_g f = \sum_{k=1}^{\infty} c_k \big[ S_{\alpha_k} R_{h_1} f - S_{b_k} R_{h_1} f \big].\]
Applying $T$, we have:
\begin{align*}
R_gTf &= \sum_{k=1}^{\infty} c_k \big[ S_{\alpha_k} R_{h_1} T f - S_{b_k} R_{h_1} T f \big]
= \sum_{k=1}^{\infty} c_k \big[ S_{\alpha_k} T R_{h_1} f - S_{b_k} T R_{h_1} f \big],
\\
T R_g f &= T \Bigg[ \sum_{k=1}^{n} c_k \big( S_{\alpha_k} R_{h_1} f - S_{b_k} R_{h_1} f \big) \Bigg]
= \sum_{k=1}^{n} c_k \big( T S_{\alpha_k} R_{h_1} f - T S_{b_k} R_{h_1} f \big).
\end{align*}
Under the shift commutation assumption ($T S_\alpha = S_\alpha T$ for all $\alpha\in\mathbb{R}$), these two expressions are identical. Since $f\in E$ was arbitrary, this means $R_gT=TR_g\in\mathcal{L}(E)$ for any step function $g$ as above.

By Theorem \ref{Thm:stepdense}, such functions $g$ are dense in $A$. By Proposition \ref{Prop:continuity}, since the map $g\mapsto R_g$ is continuous, this means that if $T$ commutes with $R_g$ for all step functions $g$, then $T$ commutes with $R_g$ for all $g\in A$, as required.
\end{proof}

\subsection{The $m$th Root with Roots of Unity}
\label{subsec:mth_root}

\begin{lem}[Characterising $m$th roots among convolution operators] \label{lem:root_inside_Rg}
Let $m\in\mathbb N$ and $\alpha\in\mathbb{R}^+$, and suppose that $g\in A$ satisfies
\[ R_g^m = I_{-\infty}^{\alpha}. \]
Then there exists an $m$th root of unity $\eta\in\mathbb{C}$ such that
\[ R_g = \eta\cdot I_{-\infty}^{\alpha/m}, \]
or equivalently, $g = \eta\, h_{\alpha/m}$ in $A$, where we recall the $h$ functions from \eqref{h:fns}. This is (a slight generalisation of) the analogue of fact 6 from the proof of Theorem \ref{Thm:CMorig}.
\end{lem}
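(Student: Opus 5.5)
We work inside the convolution algebra and reduce everything to a factorisation in the integral domain $\mathcal{D}'_{+}(\mathbb{R})$ of Proposition~\ref{Prop:intdom:Schwartz}. Since $R_{g_1}\circ R_{g_2}=R_{g_1*g_2}$, the hypothesis reads $R_{g^{*m}}=R_{h_\alpha}$, where $g^{*m}$ denotes the $m$-fold convolution power. The map $g\mapsto R_g$ is injective by Proposition~\ref{Prop:E_integral_domain}; this uses $g^{*m}\in A$, which follows from the support computation and local integrability of convolutions of functions in $A$. Hence $g^{*m}=h_\alpha$ a.e. Setting $u:=h_{\alpha/m}\in A$, we have $u^{*m}=h_\alpha$, so
\[
g^{*m}-u^{*m}=0\quad\text{in }\mathcal{D}'_{+}(\mathbb{R}).
\]

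Next we factorise this relation. Working in the commutative ring $\mathcal{D}'_{+}(\mathbb{R})$, we would like to write
\[
g^{*m}-u^{*m}=\prod_{j=0}^{m-1}\big(g-\eta_j u\big),\qquad \eta_j=e^{2\pi i j/m},
\]
but this polynomial identity $X^m-Y^m=\prod(X-\eta_jY)$ needs complex scalars. The ring $\mathcal{D}'_{+}(\mathbb{R})$ of complex-valued distributions is a $\mathbb{C}$-algebra, and for commuting elements of any commutative $\mathbb{C}$-algebra the identity holds, so the factorisation is valid. The product is zero, and the integral domain property (applied inductively to the $m$ factors) forces $g-\eta_j u=0$ for some $j$. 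Thus $g=\eta\,h_{\alpha/m}$ with $\eta^m=1$, first as distributions and then a.e., since a locally integrable function defining the zero distribution vanishes a.e. Finally, applying $R$ gives $R_g=\eta R_{h_{\alpha/m}}=\eta\, I^{\alpha/m}_{-\infty}$, which is the claim.

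The main obstacle is the bookkeeping in the first step, namely passing legitimately from the operator identity $R_g^m=I^{\alpha}_{-\infty}$ to the function identity $g^{*m}=h_\alpha$. Here we must check two things. First, $R_{h_\alpha}=I^\alpha_{-\infty}$ on $E$. Second, $g^{*m}$ genuinely lies in $A$, so that Proposition~\ref{Prop:E_integral_domain} applies: if $R_{k}f=0$ for all $f\in E$ with $k=g^{*m}-h_\alpha\in A$, choosing any nonzero $f\in E$ (e.g.\ a bump) gives $k*f=0$, hence $k=0$. The compatibility \eqref{eq:dist_conv_compatibility}, which identifies convolutions of functions with convolutions of the corresponding distributions, then lets us transfer $g^{*m}=u^{*m}$ into $\mathcal{D}'_{+}(\mathbb{R})$. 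Once this translation is secured, the algebraic factorisation step is straightforward.
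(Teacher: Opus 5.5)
Your proof is correct and follows the paper's argument. You pass from $R_g^m=R_{h_\alpha}$ to $g^{*m}=h_\alpha$ by injectivity of $g\mapsto R_g$, factor $g^{*m}-h_{\alpha/m}^{*m}$ over the $m$th roots of unity, and conclude via the integral domain property of $\mathcal{D}'_{+}(\mathbb{R})$. Your extra bookkeeping fills in details the paper leaves implicit: that $g^{*m}\in A$, the compatibility of function and distributional convolution, and the validity of the factorisation in a commutative $\mathbb{C}$-algebra.
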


\begin{proof}
Let $g^{(*m)}$ denote the $m$-fold convolution of $g$ with itself. Then we have
\[ R_{g^{(*m)}} = R_g^m = I_{-\infty}^{\alpha}=R_{h_\alpha}. \]
By the injectivity property from Proposition \ref{Prop:E_integral_domain}, we obtain
\[ g^{(*m)} = h_\alpha \quad \text{as kernels in }A. \]
Therefore, in the convolution algebra,
\[
0=g^{(*m)}-h_\alpha=g^{(*m)}-h_{\alpha/m}^{(*m)} = \big(g-\eta_1 h_{\alpha/m}\big) * \cdots * \big(g-\eta_m h_{\alpha/m}\big), \]
where $\eta_k = e^{2\pi i k/m}$ ($k=1,\cdots,m$) are the $m$th roots of unity in $\mathbb{C}$. By the integral domain property (Proposition \ref{Prop:intdom:Schwartz}), one of the $m$ factors must vanish. Hence $g=\eta\, h_{\alpha/m}$ for some $m$th root of unity $\eta$, as claimed.
\end{proof}

\begin{remark} \label{Rem:extending mth root}
Taking $\alpha=1$ in Lemma \ref{lem:root_inside_Rg} allows us to characterise the $m$th roots of the first-order integral operator $I^{1}_{-\infty}$ among convolution operators, and it is this result that we seek to extend (in Lemma \ref{lem:root_without_assuming_Rg} below) to the case where convolution operators are not assumed. The original paper of Cartwright and McMullen \cite{cartwright-mcmullen} did not mention a result in the form of Lemma \ref{lem:root_inside_Rg}, only the $\alpha=1$ case, but in fact it is the $\alpha=m+1$ case of Lemma \ref{lem:root_inside_Rg} that will be required in the proof below.
\end{remark}

\begin{lem}[Characterising $m$th roots without assuming convolution operators]
\label{lem:root_without_assuming_Rg}
Let $m\in\mathbb N$ and suppose $T\in\mathcal L(E)$ commutes with all shift operators (satisfies axiom (v) in the statement of Theorem \ref{Thm:Lploc}) and satisfies
\[
T^m = I_{-\infty}^1.
\]
Then there exists an $m$th root of unity $\eta\in\mathbb{C}$ such that
\[ R_g = \eta\cdot I_{-\infty}^{1/m}, \]
or equivalently, $T=R_g$ with $g = \eta\, h_{1/m}$ in $A$, with $h_{\alpha}$ as defined in \eqref{h:fns}. This is the analogue of fact 7 from the proof of Theorem \ref{Thm:CMorig}.
\end{lem}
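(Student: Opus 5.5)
The plan is to follow the original Cartwright--McMullen argument for fact 7, using the analogues established above. The central idea is to show that $T$ is itself a convolution operator $R_g$ for some $g\in A$, and then apply Lemma \ref{lem:root_inside_Rg}.

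\textbf{Step 1: commutation.} Since $T^m=I^1_{-\infty}=R_{h_1}$, the operator $T$ commutes with $R_{h_1}$: indeed $TR_{h_1}=T^{m+1}=R_{h_1}T$. By hypothesis $T$ also commutes with all shifts. Theorem \ref{Thm:commutingRg} therefore gives $TR_g=R_gT$ for every $g\in A$.

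\textbf{Step 2: produce a kernel.} I will apply $T$ to a function of $E$ that plays the role of an approximate identity. The natural candidate is $h_1$ itself, which belongs to $E$ in the $L^p_{\mathrm{loc}}$ case. Set
\[
k:=T h_1\in E\subset A .
\]
For any $f\in E$, commutativity of convolution and Step 1 give
\[
R_{h_1}Tf=TR_{h_1}f=T(h_1*f)=T(R_f h_1)=R_f T h_1=f*k=R_k f .
\]
This requires $f\in A$, which holds since $E\subset A$. Hence $R_{h_1}T=R_k$.

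In the $C^k$ case $h_1\notin E$, so I would use $h_{k+2}$ in place of $h_1$. This works because $R_{h_{k+2}}=R_{h_1}^{k+2}$ also commutes with $T$. The same computation then gives $R_{h_{k+2}}T=R_{k'}$ with $k'=Th_{k+2}$.

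\textbf{Step 3: compare $m$th powers.} Raising to the $m$th power, and using that $T$ commutes with $R_{h_1}$, gives
\[
R_k^m=(R_{h_1}T)^m=R_{h_1}^mT^m=R_{h_1}^{m+1}=I^{m+1}_{-\infty}.
\]
Lemma \ref{lem:root_inside_Rg} with $\alpha=m+1$, as anticipated in Remark \ref{Rem:extending mth root}, then yields $k=\eta\,h_{(m+1)/m}$ for some $m$th root of unity $\eta$. Consequently
\[
R_{h_1}T=R_k=\eta R_{h_1}R_{h_{1/m}}=R_{h_1}\bigl(\eta I^{1/m}_{-\infty}\bigr).
\]

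\textbf{Step 4: cancel $R_{h_1}$.} By Proposition \ref{Prop:E_integral_domain}, $R_{h_1}$ is injective on $E$. Applying this to $(T-\eta I^{1/m}_{-\infty})f$ for each $f\in E$ gives $T=\eta I^{1/m}_{-\infty}=R_{\eta h_{1/m}}$, which is the claim.

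\textbf{Main obstacle.} The delicate step is Step 2. There are two points to check carefully. First, the identity $T(R_fh_1)=R_f(Th_1)$ requires viewing $h_1$ as an element of $E$ while $f$ is used as a kernel in $A$; this is why the inclusion $E\subset A$ and the choice of the test function matter. Second, in the $C^k$ setting the kernel must be smooth enough to lie in $E$, which forces the replacement of $h_1$ by $h_{k+2}$ and of the exponent $m+1$ by $m(k+2)+1$.
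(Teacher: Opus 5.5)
Your proposal is correct and follows the paper's proof essentially step for step: set $S=R_{h_1}T$, use Theorem~\ref{Thm:commutingRg} to identify $S=R_{Th_1}$, compute $S^m=R_{h_1}^{m+1}=R_{h_{m+1}}$, apply Lemma~\ref{lem:root_inside_Rg} with $\alpha=m+1$, and cancel the injective operator $R_{h_1}$. Your extra observation that $h_1\notin E$ in the $C^k$ case is right, and your repair (use $h_{k+2}\in E$ and $\alpha=m(k+2)+1$) works; it fixes a point that the paper's proof passes over silently.
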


\begin{proof}
Recall that $I_{-\infty}^1=R_{h_1}$, and let $S := R_{h_1}T$. Note that $T^m=R_{h_1}$ implies $TR_{h_1}=R_{h_1}T$, therefore $S R_{h_1} = R_{h_1} S$. By Theorem \ref{Thm:commutingRg}, this implies that $T R_g = R_g T$ and $S R_g = R_g S$ for all $g\in A$. In particular, let us take $g=T(h_1)$, the $T$-image of the Heaviside kernel in $A$. Then we have, for any $f\in E\subset A$:
\[
S(f) = TR_{h_1}(f) = T(h_1*f) = T(f*h_1) = TR_f(h_1) = R_fT(h_1) = R_fg = f*g = g*f = R_gf,
\]
which means $S=R_g$ is a convolution operator. Also,
\[ S^m = (R_{h_1}T)^m = R_{h_1}^{m}T^m = R_{h_1}^{m}R_{h_1} = R_{h_1}^{m+1} = R_{h_{m+1}}.\]
By Lemma~\ref{lem:root_inside_Rg} in the case $\alpha=m+1$, we obtain
\[ S = \eta\, R_{h_{(m+1)/m}} = \eta\, R_{h_{1 + 1/m}} = \eta\, R_{h_1} R_{h_{1/m}},\]
for some $m$th root of unity $\eta\in\mathbb{C}$. Therefore,
\[ R_{h_1}T = S = \eta\, R_{h_1} R_{g_{1/m}}. \]
Since $R_{h_1}$ is injective, we can cancel $R_{h_1}$ from the left and conclude that $T = \eta\, R_{h_{1/m}}$, as required.
\end{proof}

\subsection{Proof of Theorem \ref{Thm:Lploc}}

The proof of Theorem \ref{Thm:Lploc} can now be completed quickly using the results from the above subsections. Firstly, it is a classical fact that Riemann--Liouville fractional integrals satisfy the five given axioms, so it remains to prove uniqueness.

The case $\alpha=1$ is clear, since $J_1=I_{-\infty}^{1}$ by the given axiom (i).

The case $\alpha=1/m$ ($m\in\mathbb{N}$) follows from Lemma \ref{lem:root_without_assuming_Rg}, using the semigroup axiom (ii) to know that $J_{1/m}$ must be an $m$th root of $I_{-\infty}^{1}$ and the shift-commutation axiom (v) as it is part of the assumptions in Lemma \ref{lem:root_without_assuming_Rg}, and then the positivity axiom (iii) to say that the $m$th root of unity $\eta$ from the statement of Lemma \ref{lem:root_without_assuming_Rg} must in fact be $1$, since both $J_{1/m}$ and $I_{-\infty}^{1/m}$ are positive operators so their multiplying factor $\eta$ must be a positive real number. Here we note that the weaker positivity axiom (iii*) would also be sufficient for our proof to work.

The case $\alpha\in\mathbb{Q}^+$ then follows from the semigroup axiom (ii).

Finally, we use the continuity axiom (iv) to get the full result for $\alpha\in\mathbb{R}^+$. Note that requiring a Hausdorff topology on $\mathcal{L}(E)$ is still a reasonable assumption since the natural topology on $\mathcal{L}(E)$ is Hausdorff whenever $E$ is a Fr\'echet space, by Remark \ref{Rem:LEfrechet}. \qed

%\section{The result in $C^k(\mathbb{R})$}

\section{The result in a space of distributions} \label{Sec:distrib}

\begin{thm} \label{Thm:D'}
Let $E$ be the space of distributions on the real line with left-bounded support:
\[
E = \mathcal{D}'_{+}(\mathbb{R}) := \big\{ T \in \mathcal{D}'(\mathbb{R})\: : \:\exists\, a\in\mathbb{R}\ \text{with } \operatorname{supp}(T)\subset [a,\infty) \big\}.
\]
There exists a unique family $\{J_{\alpha}\}_{\alpha>0}$ of continuous linear operators on $E$ satisfying the following four axioms:
\begin{enumerate}[(i)]
\item $J_1f=f*h_1$, i.e. convolution with the Heaviside function, for every $f\in E$.
\item Semigroup property: $J_{\alpha}J_{\beta}=J_{\alpha+\beta}$ for all $\alpha,\beta>0$.
\item (A weak version of) positivity: for every $\alpha>0$, there exists a positive function $f\in L^1_{\mathrm{loc}}(\mathbb{R})\subset E$ with $f>0$ a.e. and $\langle J_{\alpha}f,\phi\rangle>0$ for some $\phi\in\mathcal{D}(\mathbb{R})$.
\item Continuity: the map $\alpha\mapsto J_{\alpha}$ is continuous from $(0,\infty)$ into the space $\mathcal{L}(E)$ under some Hausdorff topology weaker than the usual seminormed topology on this space.
\item Shift commutation: every $J^{\alpha}$ commutes with every shift operator $S_\tau$, defined by extending $\left(S_\tau f\right)(x) := f(x-\tau)$ naturally to distributions.
\end{enumerate}
The unique family satisfying these five axioms is the family of Riemann--Liouville fractional integrals with constant of integration $-\infty$, namely $J_{\alpha}f=I^{\alpha}_{-\infty}f=f*h_{\alpha}$.
\end{thm}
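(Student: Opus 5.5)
We follow the same scheme as the proof of Theorem \ref{Thm:Lploc}, now with $E=A=\mathcal{D}'_{+}(\mathbb{R})$. For $g\in\mathcal{D}'_{+}(\mathbb{R})$ we set $R_g f=g*f$. The existence part is classical: $h_\alpha\in\mathcal{D}'_{+}(\mathbb{R})$, $h_\alpha*h_\beta=h_{\alpha+\beta}$, convolution commutes with translations, and $\alpha\mapsto R_{h_\alpha}$ is continuous. Positivity in the weak sense (iii) holds by taking $f=h_1$ and $\phi\geq0$ a nonzero test function supported in $(0,\infty)$. For uniqueness we prove the analogues of facts 1--7 and then argue exactly as in the final subsection of Section \ref{Sec:Lploc}.

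\textbf{Step 1 (facts 1--4).} By Proposition \ref{Prop:intdom:Schwartz}, convolution is a well-defined commutative, associative operation on $\mathcal{D}'_{+}(\mathbb{R})$ with no zero divisors. Hence $R_g$ is injective for every $g\neq0$. Moreover, $g\mapsto R_g$ is injective, since $R_g\delta=g$. Continuity of each $R_g$ on $E$, with its usual topology as an inductive limit of the spaces $\mathcal{D}'_{[a,\infty)}$, is Schwartz's separate continuity of convolution of distributions with supports bounded on the left. We simply quote it and do not reprove it.

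\textbf{Step 2 (fact 5) and Step 3 (facts 6--7).} Step 2 is the heart of the argument. Let $T\in\mathcal{L}(E)$ commute with all shifts $S_\tau$ and with $R_{h_1}$. For the step functions $g$ of Theorem \ref{Thm:commutingRg}, the identity $R_g=\sum_k c_k(S_{\alpha_k}-S_{b_k})R_{h_1}$ holds verbatim for distributions, so $T$ commutes with $R_g$. To pass to general kernels, there is in fact a shortcut which we would try first: for every $f\in E$,
\[
Tf=T(f*\delta)=T R_f\delta = R_f T\delta = f*(T\delta)=R_{T\delta}f,
\]
provided $T$ commutes with every $R_f$. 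To get this, we use density of step functions (or of $\mathcal{D}$) with left-bounded support in $\mathcal{D}'_{+}$, for instance via regularisation $f*\rho_\varepsilon\to f$ and Theorem \ref{Thm:stepdense} applied to smooth functions, together with continuity of $g\mapsto R_g$. The continuity of $g\mapsto R_g$ into $\mathcal{L}(E)$ on bounded-support pieces is the main obstacle: the topology of $\mathcal{D}'_{+}$ is not Fr\'echet, so we would work on each fixed space $\mathcal{D}'_{[a,\infty)}$ and use sequential density, which suffices for equalities of operators. Granting this, any such $T$ is the convolution operator $R_{T\delta}$. This shortens Lemma \ref{lem:root_without_assuming_Rg}: if $T^m=R_{h_1}$, then $T=R_g$ with $g=T\delta$, so $g^{*m}=h_1$. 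Lemma \ref{lem:root_inside_Rg}, whose proof uses only Proposition \ref{Prop:intdom:Schwartz}, then gives $T=\eta R_{h_{1/m}}$ for some $m$th root of unity $\eta$. (Alternatively, we can reuse the $S=R_{h_1}T$ argument with $\alpha=m+1$ exactly as before.)

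\textbf{Step 4 (conclusion).} By axiom (ii), $J_{1/m}^m=J_1=R_{h_1}$, and $J_{1/m}$ commutes with shifts by (v). Step 3 therefore gives $J_{1/m}=\eta R_{h_{1/m}}$ for an $m$th root of unity $\eta$. By (iii), there are $f>0$ a.e. and $\phi$ with $\langle J_{1/m}f,\phi\rangle=\eta\langle f*h_{1/m},\phi\rangle>0$. We need this to force $\eta>0$, hence $\eta=1$. This requires $\phi$ to be real-valued, so that $\langle f*h_{1/m},\phi\rangle$ is real; if complex test functions are allowed, we would read the axiom as requiring $\phi\geq0$, and then the pairing is a positive real number, since $f*h_{1/m}\geq0$ is a locally integrable function. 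The semigroup property then gives $J_\alpha=R_{h_\alpha}$ for all $\alpha\in\mathbb{Q}^+$. Finally, continuity (iv) together with the Hausdorff property extends the equality to all $\alpha>0$, because two continuous maps into a Hausdorff space that agree on the dense set $\mathbb{Q}^+$ coincide.
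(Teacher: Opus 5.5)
Your argument is sound and follows the paper's skeleton: facts 1--7 with $E=A=\mathcal{D}'_{+}(\mathbb{R})$, fact 5 via step functions, shift commutation and density, then positivity, the semigroup law on $\mathbb{Q}^+$, and continuity. Your treatment of fact 7, however, is genuinely different and shorter. The paper reuses Lemma \ref{lem:root_without_assuming_Rg} verbatim: it forms $S=R_{h_1}T$, shows $S=R_{T(h_1)}$, applies Lemma \ref{lem:root_inside_Rg} with $\alpha=m+1$, and cancels the injective $R_{h_1}$. That detour was designed for Section \ref{Sec:Lploc}, where $\delta\notin E$. You use $\delta\in\mathcal{D}'_{+}(\mathbb{R})$ to get directly the structural fact $T=R_{T\delta}$ for any $T$ commuting with all $R_f$. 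Fact 7 is then just Lemma \ref{lem:root_inside_Rg} with $\alpha=1$ plus injectivity via $R_g\delta=g$. Both routes rest on the same fact 5. The paper's version gives one argument for both the function and distribution settings; yours gives a cleaner proof here and shows what is really going on.

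Your idea actually goes further. Write $\delta_\tau:=S_\tau\delta$; shift commutation gives $T\delta_\tau=S_\tau T\delta=\delta_\tau*T\delta$, so $T$ agrees with $R_{T\delta}$ on finite combinations of Dirac masses. Every $u$ with $\operatorname{supp}u\subset[a,\infty)$ is a $\mathcal{D}'$-limit of such combinations supported in $[a-1,\infty)$. The same limiting argument therefore shows that shift commutation alone forces $T=R_{T\delta}$, with no step functions and no use of $TR_{h_1}=R_{h_1}T$.

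Two smaller points. First, the step you grant is easier than you suggest. You never need $g\mapsto R_g$ to be continuous into $\mathcal{L}(E)$. You only need $g_n*u\to g*u$ in $\mathcal{D}'$ for the two fixed distributions $u=f$ and $u=Tf$, whenever $g_n\to g$ in $\mathcal{D}'$ with all supports in a common $[c,\infty)$; your regularise-then-approximate construction provides such $g_n$. This convergence holds because the left support bounds of $g_n$ and $u$ let you write $\langle g_n*u,\phi\rangle=\langle g_n,\psi\rangle$ for a test function $\psi$ independent of $n$. Together with the continuity of $T$, this makes the paper's unspecified ``duality argument'' precise.

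Second, in Step 4 a real-valued $\phi$ only makes the pairing real. So $\eta\langle f*h_{1/m},\phi\rangle>0$ gives $\eta\in\{\pm1\}$ rather than $\eta=1$, since the pairing may be negative. There are two fixes. Either require $\phi\geq0$ in every case, or keep real $\phi$ and apply the argument to $J_{1/(2m)}=\eta' R_{h_{1/(2m)}}$ with $\eta'\in\{\pm1\}$, which gives $J_{1/m}=J_{1/(2m)}^2=(\eta')^2R_{h_{1/m}}=R_{h_{1/m}}$. The paper simply asserts $\langle I^{1/m}_{-\infty}f,\phi\rangle>0$, so your attention to this point is warranted. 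Note also that no element of $E$ is $>0$ a.e.\ on all of $\mathbb{R}$, so (iii) has to be read as $f\geq0$, $f\neq0$; under that reading your choice $f=h_1$ in the existence check is fine.
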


\begin{proof}
In this case, the spaces $E$ and $A$ are both the same, namely $\mathcal{D}'_{+}(\mathbb{R})$, as it is not reasonable for $A$ to be any bigger than this space $E$. Convolution operators $R_g : f \mapsto g * f$ are defined using Fourier-type convolution. Proposition \ref{Prop:intdom:Schwartz} immediately gives injectivity of every $R_g$ and of the map $g\mapsto R_g$. Convolution is also bilinear and separately continuous \cite[Chapter 3]{beffa}, so already we have the analogues of facts 1, 2, 3, 4 from the proof of Theorem \ref{Thm:CMorig}.

Theorem \ref{Thm:stepdense} shows that step functions with left-bounded support are dense in the set of $L^1_{\mathrm{loc}}(\mathbb{R})$ functions with left-bounded support, which is weakly dense in $\mathcal{D}'_{+}(\mathbb{R})$ by a well-known result \cite[Proposition 9.5]{folland}. Then the analogue of Theorem \ref{Thm:commutingRg} is still true in the space $\mathcal{D}'_{+}(\mathbb{R})$: if $T\in\mathcal{L}(E)$ is a continuous linear operator that commutes with $R_{h_1}$ and with every shift operator, then we can say that $T$ commutes with $R_g$ for every step function $g$. Then, for a general $g\in\mathcal{D}'_{+}(\mathbb{R})$, we can find a sequence $(g_n)$ of step functions such that
\begin{equation} \label{testfn:limit}
\langle g,\phi \rangle = \lim_{n\to\infty}\langle g_n,\phi \rangle\qquad\text{ for all }\phi\in\mathcal{D}(\mathbb{R}).
\end{equation}
For any $f\in\mathcal{D}'_{+}(\mathbb{R})$ and any $n\in\mathbb{N}$, since $g_n$ is a step function,% and therefore expressible as a linear combination of shift operators applied to $h_1$,
we have
\[
T\big(g_n*f\big)=g_n*Tf\in\mathcal{D}'_{+}(\mathbb{R}),
\]
which means that, for any $\phi\in\mathcal{D}(\mathbb{R})$, we have
\[
\langle T\big(g_n*f\big),\phi\rangle=\langle g_n*Tf,\phi\rangle,
\]
and then we can use a duality argument together with \eqref{testfn:limit} to get the same result with $g_n$ replaced by $g$. Thus, $T$ commutes with $R_g$ for every $g\in\mathcal{D}'_{+}(\mathbb{R})$, and we have the analogue of fact 5 from the proof of Theorem \ref{Thm:CMorig}.

Finally, the analogues of facts 6 and 7 from the proof of Theorem \ref{Thm:CMorig} can be proved in exactly the same way as Lemma \ref{lem:root_inside_Rg} and Lemma \ref{lem:root_without_assuming_Rg}, since the proof of Lemma \ref{lem:root_inside_Rg} just uses the integral domain property from the space of distributions and the proof of Lemma \ref{lem:root_without_assuming_Rg} does not really use any properties of the spaces $E$ and $A$ except those established in the earlier numbered facts from the proof of Theorem \ref{Thm:CMorig}.

Now the proof of Theorem \ref{Thm:D'} goes as follows.
\begin{itemize}
\item The case $\alpha=1$ is clear, since $J_1=I_{-\infty}^{1}$ by the given axiom (i).

\item For the case $\alpha=1/m$ and $m\in\mathbb{N}$, axioms (ii) and (v) together with the analogue of Lemma \ref{lem:root_without_assuming_Rg} tell us that $J_{1/m}$ must be a scalar complex $m$th root of unity multiplied by $I_{-\infty}^{1/m}$. Now we need to use axiom (iii): there is a function $f\in L^1_{\mathrm{loc}}(\mathbb{R})\subset E$ and a test function $\phi\in\mathcal{D}(\mathbb{R})$ such that $f>0$ a.e. and $\langle J_{1/m}f,\phi\rangle>0$, but we also know that $I_{-\infty}^{1/m}$ is a positive operator, thus $I_{-\infty}^{1/m}f\geq0$ a.e. and $\langle I_{-\infty}^{1/m}f,\phi\rangle>0$. Then the complex multiplier $\eta$ between $J_{1/m}f$ and $I_{-\infty}^{1/m}f$ must be a positive real number, and the only positive real root of unity is $1$, so we have $J_{1/m}=I_{-\infty}^{1/m}$.

\item The case $\alpha\in\mathbb{Q}^+$ then follows from the semigroup axiom (ii).

\item Finally, we use the continuity axiom (iv) to get the full result for $\alpha\in\mathbb{R}^+$.
\end{itemize}
\end{proof}

\begin{remark}
The positivity axiom here has been significantly weakened, since positivity of distributions is more complicated to define and is not necessary for the proof to work. The same weakening of the positivity axiom could have been used in Theorem \ref{Thm:Lploc}, with the same logical argument in the proof.

Indeed, it may be logically possible to remove the positivity axiom entirely, as we did for the compact interval setting in Theorem \ref{Thm:CMnopos}. However, we were not able to do this either in the case of Theorem \ref{Thm:Lploc} (Fr\'echet spaces of functions on $\mathbb{R}$) or in the case of Theorem \ref{Thm:D'} (spaces of distributions), because in these cases the space $\mathcal{L}(E)$ is not necessarily metrisable and therefore our strengthening of the continuity axiom in Theorem \ref{Thm:CMnopos} to be into a metric space (rather than just a Hausdorff topological space) may not make sense.
\end{remark}

\section{Conclusions} \label{Sec:concl}

In this paper, we have revisited a result from 1978 \cite{cartwright-mcmullen} in order to cast new light on it and extend it to new settings. Our main contributions are as follows.
\begin{itemize}
\item In Figure \ref{Fig1}, we have shown, in an explicit and visual way, the structure of Cartwright and McMullen's proof of their original result (Theorem \ref{Thm:CMorig} in our paper). This has been very helpful for us in understanding how to modify the proof for other contexts.
\item In Theorem \ref{Thm:CMnopos}, we have proved that one of the original four axioms can essentially be removed, without significantly changing the result.
\item In Theorem \ref{Thm:Lploc}, we have extended the Cartwright--McMullen theorem and proof to the setting of Fr\'echet spaces of functions defined on the whole real line with left-bounded support. This required some significant modifications to the method of proof and even an extra axiom to be added in the statement of the result.
\item In Theorem \ref{Thm:D'}, we have extended the theorem and proof even further, to the setting of spaces of distributions with left-bounded support. The proof in this case is largely the same as for Theorem \ref{Thm:Lploc}, with a few modifications needed for the distributional setting.
\end{itemize}

We believe that this work opens up the topic of unique axiomatisation of fractional integral operators to many new potential directions of investigation, some of which we briefly mention as follows.

Here, we have considered two types of convolution operation: Laplace type and Fourier type. In the literature, many other convolutions have been considered \cite{dimovski,yakubovich-luchko}, and it may be interesting to consider what sort of uniqueness results could be proved with respect to other types of convolution. Also, there are many other function spaces that could be considered, in addition to the spaces of continuously differentiable or integrable functions that we have worked with here, and for some functional analysis researchers it might be useful to have uniqueness results in other domains. As other researchers have done recently \cite{caolabora,jornet}, it may also be possible to prove uniqueness results for other families of operators, including multi-parameter ones, in addition to the classical Riemann--Liouville fractional integrals. Generally, proving uniqueness results for known families of fractional integral operators will provide important knowledge about the structure of the whole field of fractional calculus, and we believe that this is an important direction for continuing research.


\begin{thebibliography}{99}

\bibitem{miller-ross}
K. S. Miller, B. Ross, \emph{An Introduction to the Fractional Calculus and Fractional Differential Equations}, John Wiley, New York, 1993.

\bibitem{teodoro-machado-oliveira}
G. S. Teodoro, J. A. Tenreiro Machado, E. C. de Oliveira, ``A review of definitions of fractional derivatives and other operators'', \emph{Journal of Computational Physics} 388 (2019), pp. 195--208.

%\bibitem{baleanu-fernandez}
%D. Baleanu, A. Fernandez, ``On fractional operators and their classifications", \emph{Mathematics} 7(9) (2019), 830.

\bibitem{hilfer-luchko}
R. Hilfer, Y. Luchko, ``Desiderata for fractional derivatives and integrals'', \emph{Mathematics} 7 (2019), 149.

\bibitem{fernandez-fahad}
A. Fernandez, H. M. Fahad, ``On the importance of conjugation relations in fractional calculus'', \emph{Computational and Applied Mathematics} 41(6) (2022), 246.

\bibitem{diethelm}
K. Diethelm, \emph{The analysis of fractional differential equations: An application-oriented exposition using differential operators of Caputo type}, Springer, Heidelberg, 2010.

\bibitem{luchko:level}
Y. Luchko, ``Fractional derivatives and the fundamental theorem of fractional calculus'', \emph{Fractional Calculus and Applied Analysis} 23(4), pp. 939--966.

\bibitem{ross}
B. Ross (ed.), \emph{Fractional Calculus and its Applications: Proceedings of the International Conference Held at the University of New Haven, June 1974}, Springer-Verlag, Berlin, 1975.

\bibitem{cartwright-mcmullen}
D. I. Cartwright, J. R. McMullen, ``A note on the fractional calculus'', \emph{Proceedings of the Edinburgh Mathematical Society} 21 (1978), pp. 79--80.

\bibitem{caolabora}
D. Cao Labora, ``An extension of the Cartwright--McMullen theorem in fractional calculus for the Stieltjes case'', \emph{Journal of Integral Equations and Applications} 37(1) (2025), pp.~1--6.

\bibitem{jornet}
M. Jornet, ``An axiomatic characterization for the multidimensional weighted Riemann--Liouville fractional integral with respect to a function'', \emph{Journal of Integral Equations and Applications} 37(2) (2025), pp.~149--161.

\bibitem{hilfer-kleiner}
R. Hilfer, T. Kleiner, ``Fractional calculus for distributions'', \emph{Fractional Calculus and Applied Analysis} 27(5) (2024), pp. 2063--2123.

\bibitem{kleiner-hilfer}
T. Kleiner, R. Hilfer, ``Convolution on distribution spaces characterized by regularization'', \emph{Mathematische Nachrichten} 296(5), pp. 1938--1963.

\bibitem{rudin}
W. Rudin, \emph{Functional Analysis}, 2nd ed., McGraw-Hill, Singapore, 1991.

\bibitem{simon}
J. Simon, \emph{Banach, Fr\'echet, Hilbert and Neumann Spaces}, ISTE, London, 2017.

\bibitem{fernandez}
A. Fernandez, ``Abstract algebraic construction in fractional calculus: parametrised families with semigroup properties'', \emph{Complex Analysis and Operator Theory} 18 (2024), 50. 

\bibitem{martinez-sanz-martinez}
C. Martinez, M. Sanz, M. D. Martinez, ``About fractional integrals in the space of locally integrable functions'', \emph{Journal of Mathematical Analysis and Applications} 167 (1992), pp. 111--122.

\bibitem{sutherland}
W. A. Sutherland, \emph{Introduction to Metric and Topological Spaces}, 2nd ed., Oxford University Press, Oxford, 2009.

\bibitem{bourbaki}
N. Bourbaki, \emph{General Topology}, Springer-Verlag, Berlin, 1987.

\bibitem{mse}
Mathematics Stack Exchange, ``Simple classification-proof of all continuous homomorphism from the reals (under $+$) to the unit circle group of complex numbers (under $*$)'', 2017: \url{https://math.stackexchange.com/questions/2233122/}

\bibitem{Schwartz}
L. Schwartz, \emph{Th\'eorie des Distributions}, Hermann, Paris, 1966.

\bibitem{folland}
G. B. Folland, \emph{Real Analysis: Modern Techniques and their Applications}, 2nd ed., John Wiley \& Sons, New York, 1999.

\bibitem{royden}
H. L. Royden, P. M. Fitzpatrick, \emph{Real Analysis}, 4th ed., Macmillan, New York, 2010.

\bibitem{beffa}
F. Beffa, \emph{Weakly Nonlinear Systems with Applications in Communications Systems}, Springer, Cham, 2024.

\bibitem{dimovski}
I. H. Dimovski, \emph{Convolutional Calculus}, 2nd ed., Kluwer Academic Publisher, Dordrecht, 1990.

\bibitem{yakubovich-luchko}
S. B. Yakubovich, Y. Luchko, \emph{The hypergeometric approach to integral transforms and convolutions}, Springer Science \& Business Media, Berlin, 1994.

\end{thebibliography}
\end{document}